\renewcommand{\epsilon}{\varepsilon}
\renewcommand{\emptyset}{\varnothing}
\newcommand{\wt}[1]{\widetilde{#1}}
\newcommand{\setof}[2]{\big\{{#1}\,\big|\,{#2}\big\}}
\def\bEA{\begin{eqnarray*}}
\def\eEA{\end{eqnarray*}}
\def\ds{\displaystyle}
\def\tend{\longrightarrow}
\def\ds{\displaystyle}
\def\on{\operatorname}
\def\cal{\mathcal}
\def\C{{\mathbb C}}
\def\D{{\mathbb D}}
\def\N{{\mathbb N}}
\def\R{{\mathbb R}}
\def\Z{{\mathbb Z}}
\def\Re{{\on{Re}\,}}
\newcommand{\Sw}{\'Swi\k{a}tek\xspace}
\newtheorem{lemma}{Lemma}
\newtheorem{definition}[lemma]{Definition}
\newtheorem*{sublemma}{Sublemma}
\newtheorem{theorem}{Theorem}
\newtheorem{corollary}[lemma]{Corollary}
\newtheorem{proposition}[lemma]{Proposition}
\newtheorem*{theorem*}{Theorem}
\newtheorem*{conjecture}{Conjecture}
\newtheorem*{complem}{Special Case}
\newtheorem*{quest}{Question}
\newtheorem{notation}[lemma]{Notation}
\newcommand{\ov}{\overline}
\newcommand{\fil}[1]{\on{fill}({#1})}
\newcommand{\realm}[1]{\widehat{#1}}
\newcommand{\apo}[1]{\widetilde{#1}}
\begin{document}

%\GA % retirer la command \note

\title{Herman's condition and Siegel disks of bi-critical polynomials}

\author{Arnaud Chéritat}
\address{(Centre National de la Recherche Scientifique)\\
Institut de Mathématiques de Bordeaux\\
351, cours de la Libération\\
F 33405 Talence cedex\\
France}
\email{arncheritat@math.u-bordeaux.fr}

\author{Pascale Roesch}
\address{Institut de Mathématiques de Marseille I2M\\
Aix-Marseille Université, Technopôle Château-Gombert\\
39, rue F.\ Joliot Curie\\
13453 Marseille Cedex 13\\
France}
\email{pascale.roesch@univ-amu.fr}

\date{\today}

\abstract We extend a theorem of Herman from the case of unicritical polynomials to the case of polynomials with two finite critical values. This theorem states that Siegel disks of such polynomials, under a diophantine condition (called Herman's condition) on the rotation number, must have a critical point on their boundaries.
\endabstract

\maketitle

% \tableofcontents

\section{Introduction}

By a \emph{Siegel disk} of a rational map $f$ of degree at least $2$ we mean a maximal domain on which an iterate of $f$ is conjugate to an irrational rotation of a disk.
One can wonder what phenomena at the boundary of a Siegel disk prevent $f$ from having a larger linearization domain. Obviously, such a domain cannot contain a critical point.
A theorem of Fatou asserts that for an attracting periodic point, there is always a linearization domain that extends up to at least one critical point.

\begin{quest}Does the boundary of a Siegel disk always contain a critical point? \end{quest}
The answer is no. Ghys and Herman gave the first examples of polynomials having a Siegel disk without a critical point on the boundary (see~\cite{G2,H4}).

Let us now introduce the following subset of the irrationals.
\begin{definition}
Let $\cal H$ be the set of real numbers $\theta\in\R$ such that every orientation-preserving analytic circle diffeomorphism of rotation number\footnote{For further information on the notion of \emph{rotation number}, see for instance \cite{Milnor} or \cite{dMvS}, Chapter~I, Section~1. 
} $\theta$ is analytically conjugate to a rotation.
\end{definition}
Herman proved that $\cal H$ is non-empty by showing that it contains all diophantine numbers \cite{H2}.
Yoccoz characterized numbers in $\cal H$ in terms of their continued fraction expansion \cite{Y1}.

\begin{theorem*}[Ghys, \cite{G2}] For every rational map $f$ of degree $\geq 2$ having a Siegel disk $\Delta$ of period one with rotation number in $\cal H$, if $\partial \Delta$ is a Jordan curve then it contains a critical point.\end{theorem*}
Later, Herman proved in \cite{H1} a general extension theorem for holomorphic maps having an invariant annulus with rotation number in $\cal H$.
Among the corollaries he obtained is the following:
\begin{theorem*}[Herman]\ % this space forces LaTeX to skip line
\begin{enumerate}
\item\label{herman:item:1} Suppose $f$ is a rational map of degree $\geq 2$ having a Siegel disk $\Delta$ of period one with rotation number in $\cal H$. Then $f$ cannot be injective in any neighborhood of $\partial \Delta$.
\item\label{herman:item:2} Every unicritical polynomial $f(z)=z^d+c$ having a Siegel disk $\Delta$ of period one with rotation number in $\cal H$ has a critical point on $\partial \Delta$. 
\end{enumerate}
\end{theorem*}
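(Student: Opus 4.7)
The plan is to prove part (1), from which part (2) follows quickly. For part (1), I would argue by contradiction and then appeal to the general extension theorem of Herman alluded to in the introduction.

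Suppose that $f$ is injective on some open annular neighborhood $V$ of $\partial\Delta$. Let $\phi\colon\D\to\Delta$ be the linearization, i.e.\ a conformal isomorphism conjugating the rigid rotation $R_\theta(z)=e^{2\pi i\theta}z$ to $f|_\Delta$. For any $r<1$, the region $A_r=\phi(\{r<|z|<1\})$ is an $f$-invariant annulus contained in $\Delta$, bounded on the outside by $\partial\Delta$, on which $f$ acts as a holomorphic rotation of rotation number $\theta$. The injectivity hypothesis on $V$, combined with the fact that $f|_\Delta$ is a biholomorphism, lets us fatten $A_r$ across $\partial\Delta$: after possibly shrinking $V$, the union $\wt A:=A_r\cup V$ is a topological annulus that contains $\partial\Delta$ in its interior, on which $f$ acts as a holomorphic injection with invariant curve $\partial\Delta$ of rotation number $\theta\in\cal H$.

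Herman's general extension theorem for holomorphic maps with an invariant annulus of rotation number in $\cal H$ now produces a conformal conjugacy of $f|_{\wt A}$ to a rigid rotation of a round annulus, defined on a two-sided neighborhood of $\partial\Delta$. Transporting this conjugacy back through $\phi$, we obtain a holomorphic extension of $\phi$ to some disk $\{|z|<1+\epsilon\}$, still conjugating $R_\theta$ to $f$. The image $\phi(\{|z|<1+\epsilon\})$ is then a rotation domain for $f$ strictly containing $\ov\Delta$, contradicting the maximality of $\Delta$ as a Siegel disk.

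For part (2), the unique finite critical point of $P(z)=z^d+c$ is $0$. Suppose $0\notin\partial\Delta$; then some open neighborhood $V$ of $\partial\Delta$ avoids $0$, so $P$ is a local homeomorphism on $V$. Since $P|_{\partial\Delta}$ is an injective self-map of the Jordan curve $\partial\Delta$ (conjugate to $R_\theta$), a standard compactness argument---local injectivity plus injectivity on the core curve plus uniform Lipschitz bounds---shows that $P$ is in fact globally injective on a sufficiently thin tubular sub-neighborhood of $\partial\Delta$. This contradicts part (1), so $0\in\partial\Delta$. The main obstacle is entirely packaged into Herman's extension theorem in part (1); the delicate content there is the upgrade from a continuous conjugacy of $f|_{\partial\Delta}$ to $R_\theta$ to a holomorphic conjugacy on a two-sided neighborhood, which is where the arithmetic condition $\theta\in\cal H$ (via the Yoccoz-Herman analyticity result) is essential.
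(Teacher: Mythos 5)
There is a genuine gap, and it sits exactly where the theorem is hard. Your construction of $\wt{A}=A_r\cup V$ as ``a topological annulus that contains $\partial\Delta$ in its interior'' with ``invariant curve $\partial\Delta$ of rotation number $\theta$'' tacitly assumes that $\partial\Delta$ is a Jordan curve and that the linearizing map $\phi:\D\to\Delta$ extends continuously to $\partial\D$. Neither is known: local connectivity of Siegel boundaries is an open problem, $\ov{\Delta}$ may a priori shade extra components, and if $\partial\Delta$ were a Jordan curve the whole statement would reduce to the easy remark following the theorem in the paper. Concretely, $A_r\cup V$ need not be an annulus, $\partial\Delta$ is not a curve on which a rotation number can be read off, and the hypothesis of Herman's extension theorem concerns the rotation number of the circle map induced on the prime ends of the complement of the invariant compact set --- not the rotation number of the circle $\phi(\{|z|=r\})$ sitting strictly inside $\Delta$. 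Identifying that prime-end rotation number with $\theta$ is the key step, and you skip it. The Ghys--Herman argument (reproduced in the paper as the proof of case (1) of its Theorem~2) goes the other way around: uniformize $\C\setminus\realm{\Delta}$ to $\C\setminus\ov{\D}$, use the injectivity hypothesis to show the conjugated map is defined on an outer annulus $1<|z|<1+\epsilon$ and extends by Schwarz reflection to a circle diffeomorphism $\wt{f}$, prove that $\wt{f}$ has degree one and rotation number $\theta$ by approximating $\realm{\Delta}$ by the analytic sub-disks $\Delta_r=\psi(\ov{B}(0,r))$ and passing to the limit via Carath\'eodory kernel convergence, and only then use $\theta\in\cal H$ to linearize and pull back a stable domain containing a neighborhood of $\partial\Delta$, contradicting $\partial\Delta\subset J$. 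Compressing all of this into ``Herman's general extension theorem now produces a conformal conjugacy'' leaves the actual content unproved.

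The deduction of (2) from (1) has the same defect. You assert that $P|_{\partial\Delta}$ is injective because it is ``conjugate to $R_\theta$'' on the ``Jordan curve'' $\partial\Delta$, which is precisely what cannot be assumed; and the ``tubular sub-neighborhood'' language again presupposes a curve. (The step from ``no critical point on $\partial\Delta$ and $P|_{\partial\Delta}$ injective'' to ``$P$ injective on a neighborhood'' is the elementary compactness equivalence in the paper's footnote and is fine.) What is actually needed is to rule out non-injectivity of $P$ on $\partial\Delta$ for $P(z)=z^d+c$ without a critical point there: if $P(a)=P(b)$ with $a\neq b$ on $\partial\Delta$, then $b=\zeta a$ for a nontrivial $d$-th root of unity $\zeta$, and one must analyze the components $\zeta\Delta$ of $P^{-1}(\Delta)$ and derive a contradiction from the rotational symmetry. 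That symmetry argument (the ``partners'' of $\Delta$) is the real content of the reduction to (1), and it is the germ of what the paper later elaborates in its proof of case (3); it is absent from your proposal.
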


\begin{remark}\label{rem:one}
Let $K$ be a compact subset of the domain of definition of a holomorphic map $f$. The following are equivalent:
\begin{enumerate}[label=(\roman*)]
\item There is no neighborhood of $K$ on which $f$ is injective.
\item\label{item:rem:ii} Either $f$ has a critical point on $K$ or the restriction of $f$ to $K$ is non-injective (or both).
\end{enumerate}
This applies in particular to $K=\partial \Delta$ in case~\eqref{herman:item:1} of Herman's theorem.
\end{remark}

The corresponding result for Siegel disks of higher periods is the following:

\begin{theorem*}[Herman's Theorem in the periodic case]\label{cor:p}\ % this space forces LaTeX to skip line
\begin{enumerate}
\item\label{item:cor:p:1} For every rational map $f$ of degree $\geq 2$ having a Siegel disk $\Delta$ of period $p$ with rotation number in $\cal H$, there exists an $i$ between $0$ and $p-1$ such that $f$ is not injective in any neighborhood of $\partial f^i(\Delta)$.
\item\label{item:cor:p:2} For every unicritical polynomial $f(z)=z^d+c$ having a Siegel disk $\Delta$ of period $p$ and with rotation number in $\cal H$, there exists an $i$ between $0$ and $p-1$ such that $f$ has a critical point on $\partial f^i(\Delta)$.
\end{enumerate}
\end{theorem*}
\noindent For completeness, we have included a proof of this corollary in Section~\ref{sub:sup1}.

\medskip

One may state %formulate 
the following:\footnote{Stronger conjectures have been formulated: some are mentioned in Sections~\ref{sec:rem} and~\ref{sec:state}.}
\begin{conjecture} The boundaries of Siegel disks of rational maps contain a critical point whenever the rotation number is in $\cal H$.
\end{conjecture}

We contribute a further step towards this conjecture by proving the following result:

\begin{theorem}[Main theorem]\label{thm:main}
For every polynomial with two finite critical values, and a Siegel disk $\Delta$ of arbitrary period and of rotation number in $\cal H$, there is an element in the cycle of $\Delta$ whose boundary contains a critical point.
\end{theorem}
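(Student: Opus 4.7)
I argue by contradiction, assuming that no critical point of $P$ lies on $\bigcup_{i=0}^{p-1}\partial\Delta_i$. Since a Siegel disk contains no critical point in its interior, this says that $P$ has no critical points on $\bigcup_i\overline{\Delta_i}$. Consequently each $P\colon\Delta_i\to\Delta_{i+1}$ is a proper holomorphic map (because $P$ sends the Julia set into itself, so a preimage of a compact subset of $\Delta_{i+1}$ cannot accumulate on $\partial\Delta_i$), whose degree $d_i$ is a positive integer; since the product $d_0\cdots d_{p-1}$ equals the degree of $P^p\colon\Delta_0\to\Delta_0$, which is $1$ because $P^p|_{\Delta_0}$ is conjugate to a rotation, each $d_i=1$ and $P$ restricts to a biholomorphism $\Delta_i\to\Delta_{i+1}$.

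Next apply part~\eqref{herman:item:1} of Herman's theorem quoted above to the iterate $P^p$ and its period-one Siegel disk $\Delta_0$, whose rotation number is the one in our hypothesis and hence lies in $\cal H$. The conclusion is that $P^p$ is not injective on any neighbourhood of $\partial\Delta_0$, so by the equivalence in the footnote either $P^p$ has a critical point on $\partial\Delta_0$, or $P^p|_{\partial\Delta_0}$ is not injective. The first option is impossible under our assumption, since a critical point $z_0$ of $P^p$ on $\partial\Delta_0$ would satisfy $P^j(z_0)=c$ for some critical $c$ of $P$ and some $j<p$, producing a critical point of $P$ on $\partial\Delta_j$. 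So the whole task is to rule out the second option, i.e.\ the existence of distinct points $w_1,w_2\in\partial\Delta_{j-1}$ with $P(w_1)=P(w_2)$ for some $1\le j\le p$ (obtained by pulling back the alleged $P^p$-coincidence along the orbit and stopping at the first collision).

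This is where the two-critical-points hypothesis must be used. I would combine Rogers' theorem, applied to $P^p$ and each $\Delta_i$ in the cycle (which produces a critical point of $P^p$ inside every $\fil{\Delta_i}$, and hence via forward iteration a critical point of $P$ inside some $\fil{\Delta_k}$), with a Riemann--Hurwitz bookkeeping of the preimage Fatou components of $\Delta_j$ under $P$. The folding $P(w_1)=P(w_2)$ on $\partial\Delta_{j-1}$ creates an extra sheet of $P^{-1}$ attached to $\partial\Delta_{j-1}$, which must be paid for by an extra ramification of $P$; with only two critical points of $P$ available, whose orbits are already constrained by Rogers to visit the filled Siegel disks, the bookkeeping should force one of them onto some $\partial\Delta_i$, contradicting the standing assumption.

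The main obstacle I expect is that the $\partial\Delta_i$ are not assumed to be locally connected, so preimage components of $\Delta_j$ may share prime ends with $\Delta_{j-1}$ in subtle ways. To get around this I would either work on the prime-end compactifications of the $\Delta_i$, or perform a Ghys--Herman-style surgery outside $\bigcup\overline{\Delta_i}$ to replace the exterior dynamics of $\Delta_0$ by an analytic circle diffeomorphism of rotation number in $\cal H$; the definition of $\cal H$ then gives an analytic linearisation on an annulus about $\partial\Delta_0$, which together with the rotation inside $\Delta_0$ extends the Siegel disk into a strictly larger rotation domain, contradicting its maximality.
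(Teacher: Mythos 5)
Your opening reductions are sound: if no critical point of $P$ lies on $\bigcup_i\partial\Delta_i$, then each $P\colon\Delta_i\to\Delta_{i+1}$ is a biholomorphism, and Rogers' theorem applied to $P^p$ (together with the fact that $P$ sends components shaded by $\partial\Delta_i$ to components shaded by $\partial\Delta_{i+1}$) places a critical point of $P$ in some $\realm{\Delta_k}\setminus\partial\Delta_k$, i.e.\ in an ``unlikely'' bounded component of $\C\setminus\ov{\Delta_k}$. But the proposal stops exactly where the real difficulty begins. ``A Riemann--Hurwitz bookkeeping \ldots should force one of them onto some $\partial\Delta_i$'' is not an argument: there is no Riemann--Hurwitz formula for the sets $\partial\Delta_i$, which a priori may be indecomposable continua, and a fold $P(w_1)=P(w_2)$ on $\partial\Delta_{j-1}$ does not by itself create any ramification constraint. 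In the scenario that actually has to be excluded, the critical value $P(c')$ lies in a shaded component, $\Delta_1$ then has $d>1$ preimage components in the relevant component of $P^{-1}(\realm{\Delta_1})$, and (after the paper's ``Wada lakes'' argument) these partners all share the boundary $\partial\Delta_0$, so $P$ restricted to $\partial\Delta_0$ is genuinely $d$-to-one; nothing in your bookkeeping rules this out.

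Your fallback --- a Ghys--Herman surgery producing an analytic circle diffeomorphism of rotation number in $\cal H$, then linearizing --- works only when \emph{no} critical point lies in the filled closures (the paper's case $n_0=0$). Once a critical point is shaded by the cycle, uniformizing $\C\setminus\realm{\Delta_0}$ conjugates $P^p$ to an analytic degree-$d>1$ covering of the circle with non-vanishing derivative, not a diffeomorphism, and the definition of $\cal H$ says nothing about such maps. The missing core of the proof is: (a) a plane-topology symmetry argument showing that the $d$ partner preimages of $\Delta_1$ must all shade one another, whence $\realm{\Delta_0}$ is locally totally invariant under $P^p$; (b) Ma\~n\'e's hyperbolicity theorem for $C^2$ circle maps without critical points or non-repelling periodic points on the invariant circle --- and it is only in excluding non-repelling cycles, via Fatou's theorem that $\partial\Delta\subset\bigcup_c\omega(c)$, that the hypothesis of exactly two critical points is used; (c) the construction from this expansion of a unicritical polynomial-like restriction of $P^p$ around $\realm{\Delta_0}$, to which Herman's theorem is then applied. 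Separately, the case where \emph{both} critical points are shaded is also not addressed in your plan; the paper disposes of it with Ma\~n\'e's other theorem providing a \emph{recurrent} critical point whose $\omega$-limit set contains $\partial\Delta$. These are the essential ideas your proposal is missing.
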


Our proof is a refinement of Herman's proof with added ingredients supplied by two theorems of Mañé as well as the separation theorem of Goldberg--Milnor--Poirier--Kiwi.

\subsection{Remarks}\label{sec:rem}

There are many polynomials with only two critical values besides those with two critical points. Up to conjugacy, they come in infinitely many two parameter families, one for each planar finite bipartite tree with at least one branch point of each color.

The boundary $\partial \Delta$ of a Siegel disk of a rational map $f$ of degree $\geq 2$ is locally connected if and only if it is a Jordan curve (see \cite{Milnor}, Lemma~18.7), and this is in fact valid for general holomorphic maps as soon as the Siegel disk is compactly contained in the domain of definition. In this case, the restriction of $f$ to $\partial \Delta$ is necessarily injective (see also \cite{Milnor}, Lemma~18.7). Hence if we further assume that the rotation number is in $\cal H$, we recover Ghys' result from part \eqref{herman:item:1} of Herman's theorem: there is a critical point on $\partial \Delta$.

There are many rational maps (including many polynomials) whose Siegel disk boundaries are known to be Jordan curves. There are no known examples of rational maps where this property fails. However there are examples of holomorphic maps defined on a simply connected open subset $U$ of $\C$ with a Siegel disk compactly contained in $U$ whose boundary is not locally connected (for instance a pseudocircle, see \cite{C1}). Moreover these maps can be chosen to be injective. As it has been conjectured by Douady, it may well be the case that all Siegel disks of polynomials (or rational maps) are always Jordan domains. If true then Ghys' theorem will imply that every Siegel disk of a rational map of degree $\geq 2$ with rotation number in $\cal H$ has a critical point on its boundary.
However, this conjecture of Douady is still open and seems out of reach today.

Note also that Herman did \emph{not} prove part \eqref{herman:item:2} of his theorem by showing that $f$ must be injective on $\partial \Delta$. For instance, it is still unknown today whether there exist rotation numbers $\theta$ (even under the restriction $\theta\in\cal H$, or under the restriction $\theta\notin\cal H$) for which the boundary of the Siegel disk of $z\mapsto e^{2\pi i\theta} z+z^2$ is the whole Julia set, even though this now seems unlikely in light of the work of Inou and Shishikura \cite{IS}. Another scenario for non-injectivity is illustrated in Figure~\ref{fig:b}.

Recall that the filled-in Julia set is connected if and only if the Julia set is connected if and only if no critical orbit tends to infinity.
If a critical orbit escapes, then our 
%the 
main theorem follows from Herman's.
Indeed there is then a polynomial-like restriction of $P^p$ whose filled-in Julia set is connected and contains $\Delta$.\footnote{A proof of this well-known fact is included here for completeness: see Lemma~\ref{lem:bh}.} Moreover, this restriction has only one critical value, so it is conjugate to a unicritical polynomial near its Julia set. We can then apply part~\eqref{herman:item:2} of Herman's theorem. Therefore we could restrict to polynomials with connected Julia sets, but the proof that we will give does not require this assumption.

\begin{figure}
\begin{picture}(300,200)
\put(0,0){\includegraphics[width=300pt]{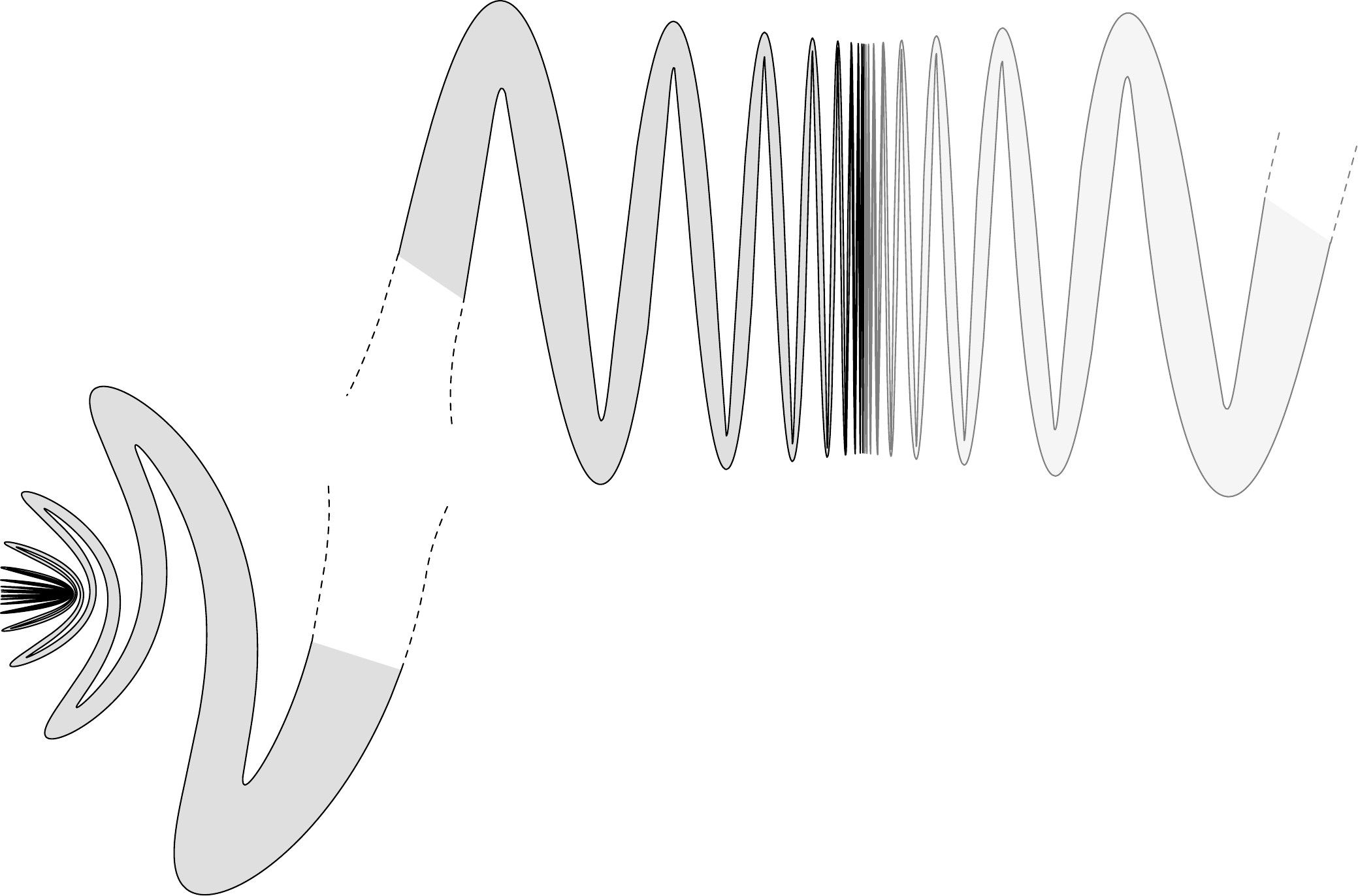}}
\put(83,94){$\Delta$}
\put(290,180){$\Delta'$}
\put(190,72){critical point}
\put(190,87){\begin{tikzpicture}\draw[-triangle 60] (1,-2) -- (0,0);\end{tikzpicture}}
\put(120,29){critical value}
\put(14,32){\begin{tikzpicture}\draw[-triangle 60] (3.5,-1.2) -- (0,0);\end{tikzpicture}}
\end{picture}
\caption{We cannot yet rule out the existence of a quadratic polynomial $P(z)=z^2+c$ with a Siegel disk $\Delta$ of period one with the critical point belonging to the boundary $\partial \Delta$, but with the restriction of $P$ to $\partial \Delta$ non-injective. The image above gives one scenario. Of course this picture is not complete, since infinitely many copies of the sine-like curves (images and preimages) must also appear on $\partial \Delta$. The first pre-image of $\Delta$ is the union of $\Delta$ and $\Delta'=$ the image of $\Delta$ by the symmetry $z\mapsto -z$ of $P$. }
\label{fig:b}
\end{figure}

\subsection{More about our knowledge}\label{sec:state}

With a different method than Herman's (approaching the %approaching the 
boundary from inside instead of %approaching it from 
outside, and using the Schwarzian derivative), Graczyk and \Sw \ proved in \cite{GrSw}:
\begin{theorem*}[Graczyk and \Sw] If a Siegel disk has bounded type rotation number and is compactly contained in the domain of definition of the map, then its boundary contains a critical point.
\end{theorem*}

In particular for rational maps (including polynomials) of degree $\geq 2$, there is always a critical point on the boundary of bounded type Siegel disks. This also follows from quasiconformal surgery techniques \cite{Zhang}, which prove more:

\begin{theorem*}[Zhang]
The boundary of a Siegel disk of a rational map of degree $\geq 2$ with bounded type\footnote{For polynomials, an extension of this theorem has recently been announced by Zhang in~\cite{Zhang2}. It uses trans-quasiconformal surgery as in~\cite{PZ} and covers rotation numbers of type $PZ$, i.e.\ irrationnals whose continued fraction expansion $[a_0;a_1,\ldots]$ satisfies $\log a_n=\cal O(\sqrt n)$ as $n\to+\infty$. These numbers form a subset of full Lebesgue measure of $\R$.} rotation number is a Jordan curve. Thus it contains at least one critical point by Ghys' theorem.
\end{theorem*}

In the case of a polynomial with a Siegel disk $\Delta$ whose boundary is not a Jordan curve, $\partial \Delta$ could separate the plane into more than two components. As already noted in Section~\ref{sec:rem}, is not known if such cases occur for polynomials or even for rational maps, and the common belief is that they do not. In fact there are no known examples of a holomorphic map $f:U\to\C$ with $U\subset \C$, having a fixed Siegel disk that is compactly contained in $U$ and whose boundary separates the plane into more that two connected components (the examples of \cite{C1} do not). If there is such a component, then $\partial\Delta$ cannot be locally connected, (it is even worse, at least for polynomials, see the second theorem of Rogers below). Examples of simply connected open sets with boundaries having more that two complementary components can be found on Figure~\ref{fig:cases}.

Let us define the \emph{filled-in Siegel disk} $\realm \Delta$ as the union of $\partial\Delta$, together with all bounded connected components of $\C\setminus\partial\Delta$ (see also Definition~\ref{def:fill}).

Working towards a generalization of part~\eqref{herman:item:2} of Herman's theorem to higher degree polynomials, Rogers proved the following in \cite{Rogers}. In the present article, we reprove it.

\begin{theorem*}[Rogers] If $f$ is a polynomial with a Siegel disk $\Delta$ of period one and rotation number in $\cal H$, then there is a critical point in the filled-in Siegel disk $\realm{\Delta}$.
\end{theorem*}
\noindent The critical point in the theorem of Rogers above cannot be in the Siegel disk $\Delta$, so it must be either on $\partial\Delta$ or in another bounded component of $\C\setminus\partial\Delta$.

Rogers also proved the following theorem in \cite{Rogers}, that we will not use in the present article:
\begin{theorem*}[Rogers]
If the polynomial f has a Siegel disk $\Delta$ of period one and with rotation number in $\cal H$, then either the boundary
$\partial \Delta$ of $\Delta$ contains a critical point
or $\partial \Delta$ is an indecomposable continuum with three properties: (1) $\partial \Delta$ has at least three
complementary domains, and $\partial \Delta$ is the boundary of each of its complementary domains, (2) each bounded
complementary domain of $\partial \Delta$ is a component of the grand orbit of $\Delta$ and so a bounded component of the Fatou set, (3) one of the bounded complementary domains of $\partial \Delta$
contains a critical point.
\end{theorem*}

To clarify the content of the above theorem, recall that a \emph{continuum} is a non-empty compact connected metric space.
A continuum is \emph{indecomposable} if it cannot be written as the union of two closed connected proper subsets.\footnote{The usual mistake is to confuse this definition with that of connectedness: it is not assumed that the two closed subsets are disjoint, instead they are assumed to be connected.}
An indecomposable continuum is never locally connected.
An indecomposable continuum in the plane which is the common boundary of at least three complementary domains is called a \emph{Lakes of Wada continuum}.
\medskip

The proofs in the present article come in three cases, as described in Section~\ref{s:reduction}. Our proof of the second case (in Lemma~\ref{lem:precii}) is a close relative of Rogers' work. 
\medskip

To conclude this section let us mention that, based on the present work, our theorem has been extended to the case of transcendental entire maps by Benini and Fagella in~\cite{BF}.

\section{Preliminaries}

\subsection{Filled-in sets and their properties}\label{sec:fill}
%\subsubsection{Hidden components}\label{subsec:unlikely}

\begin{definition}\label{def:fill}
Given a compact subset $X$ of $\C$, let $\fil{X}$ be the union of $X$ and all bounded components of $\C\setminus X$. 
\end{definition}
%This notation has been chosen as an abbreviation of filling or filled-in.
The set $\fil{X}$ is also the complement of the unbounded connected component of the complement of $X$. A compact subset $X$ of $\C$ is called \emph{full} if $X=\fil{X}$. The set $\fil{X}$ is full, i.e.\ $\fil{\fil{X}}=\fil{X}$. If $X$ is connected then $\fil{X}$ is connected.\footnote{Indeed, the claim is trivial if $X$ is empty, otherwise it follows from $\fil{X}=\ds X\cup \left(\bigcup_{U} X\cup\ov{U}\right)$ where $U$ varies over the bounded connected components of $\C\setminus X$, noting that the collection of compact connected sets formed by $X$ and all the $X\cup \ov{U}$ has at least one point in common.}
\begin{definition}We will say that the bounded components of $\C\setminus X$ are \emph{shielded} by~$X$. 
\end{definition}
%\subsubsection{Properties of the fillings}\label{subsec:moretools}

\begin{notation}
\label{n:hat}For a bounded set $A\subset C$, let
$$\realm{A}:=\fil{\ov{A}}.$$
\end{notation}
The following lemma is elementary:
\begin{lemma}\label{lem:inclusion}
 If $A$ and $B$ are compact subsets of $\C$ then
\[ A\subset \fil{B} \iff \fil{A}\subset\fil{B}.\]
\end{lemma}
\begin{proof}\ \\%...
$\Longleftarrow$: immediate since $A\subset \fil{A}$.
\\
$\Longrightarrow$: when $A\subset \fil{B}$, the unbounded component of $\C\setminus B$ is disjoint from $A$ thus it is an unbounded connected subset of $\C\setminus A$ thus it is contained in the unbounded component of $\C\setminus A$, i.e.\ $\fil{A}\subset\fil{B}$.
\end{proof}
\begin{figure}
\begin{tikzpicture}
\node at (0,0) {\includegraphics[height=266pt]{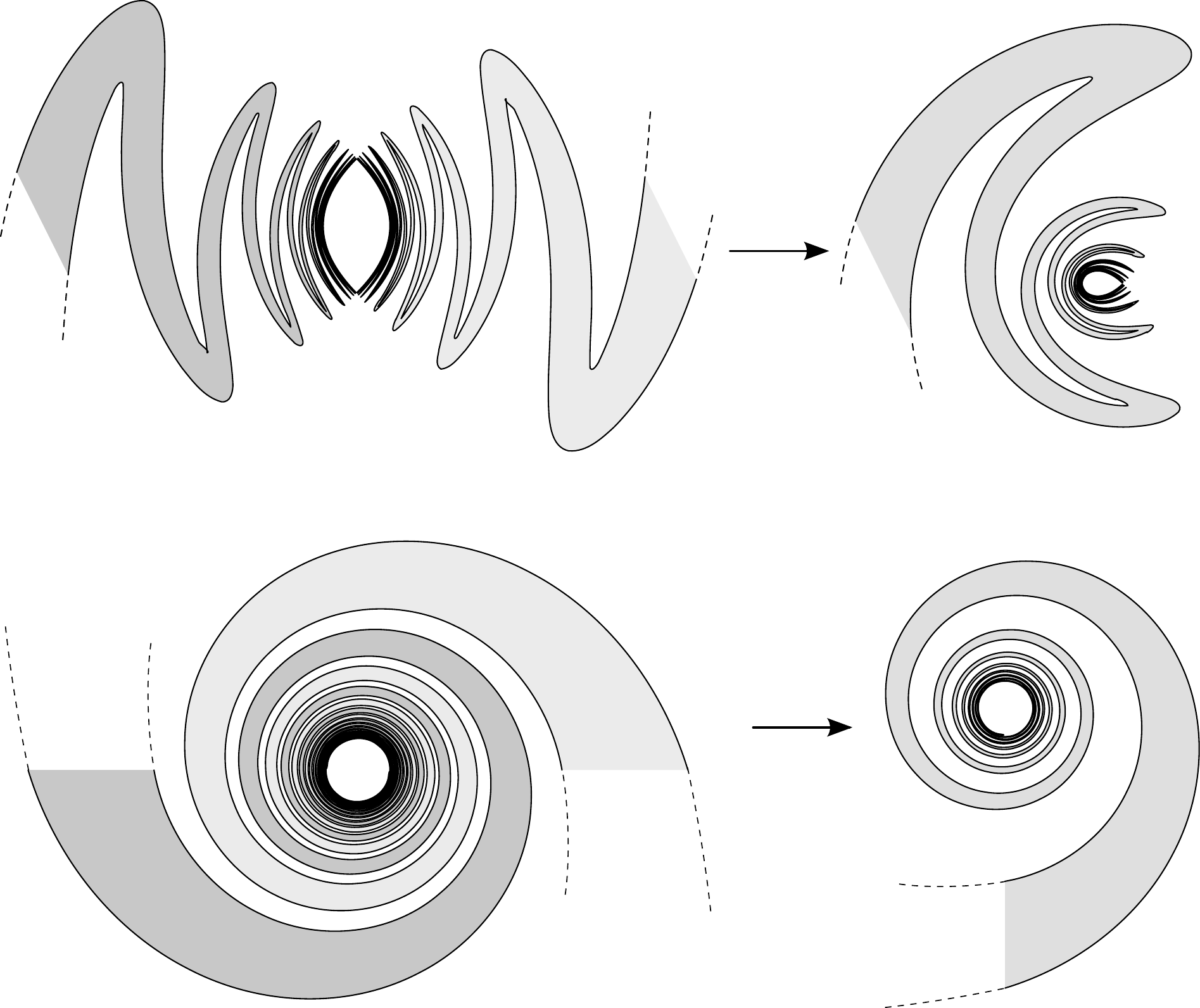}};
\end{tikzpicture}
\caption{Examples of simply connected open sets $U$ (in dark gray) and their image $V$ by $Q: z\mapsto z^2$, on which $Q$ is injective, and such that the connected component of $Q^{-1}(\realm{V})$ containing $U$ is strictly bigger than $\realm{U}$, where $\realm{V}=\fil{\ov{V}}$ and $\realm{U}=\fil{\ov{U}}$. In the top one the critical point is not in $\realm{U}$, in the bottom one it is in a shielded component (i.e.\ a bounded component of $\C\setminus\partial U$) different from $U$. Figure~\ref{fig:b} gives another example without shielded components. The situation for those hypothetical Siegel disks which are not Jordan domains might a priori include similar or more complicated features.}
\label{fig:cases}
\end{figure}

\begin{corollary}\label{cor:rincl}
 If $A$ and $B$ are bounded subsets of $\C$ then
\[ A\subset \realm{B} \iff \ov{A}\subset \realm{B} \iff \realm{A}\subset\realm{B}.\]
\end{corollary}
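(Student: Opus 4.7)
The plan is to deduce the three equivalences directly from Lemma~\ref{lem:incl}, together with one topological observation: the set $\realm{B}=\fil{\ov{B}}$ is closed. Indeed, $\realm{B}$ is by definition the complement of the unbounded connected component of the open set $\C\setminus\ov{B}$, and that component is itself open, so $\realm{B}$ is closed.

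I would first handle the equivalence $A\subset\realm{B}\iff \ov{A}\subset\realm{B}$. The implication $\Longleftarrow$ is trivial from $A\subset\ov{A}$. For $\Longrightarrow$, if $A\subset\realm{B}$ then, taking the closure and using that $\realm{B}$ is closed, we obtain $\ov{A}\subset\ov{\realm{B}}=\realm{B}$.

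I would then handle the equivalence $\ov{A}\subset\realm{B}\iff \realm{A}\subset\realm{B}$. Applying Lemma~\ref{lem:incl} to the pair $(\ov{A},\ov{B})$, which are both bounded subsets of $\C$, gives
\[ \ov{A}\subset \fil{\ov{B}} \iff \fil{\ov{A}}\subset\fil{\ov{B}},
\]
which is exactly $\ov{A}\subset\realm{B} \iff \realm{A}\subset\realm{B}$, by the very definition of $\realm{\cdot}$.

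I do not expect any serious obstacle: the only thing to double-check is that Lemma~\ref{lem:incl} indeed applies even when $\ov{A}$ or $\ov{B}$ might have empty interior or be disconnected, but its proof is written for arbitrary bounded subsets, so it does. The rest is a straightforward chaining of two equivalences.
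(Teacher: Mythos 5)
Your proof is correct and is essentially identical to the paper's: the first equivalence follows because $\realm{B}$ is closed, and the second by applying Lemma~\ref{lem:incl} to $\ov{A}$ and $\ov{B}$.
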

The first equivalence holds because $\realm{B}$ is closed, and the second follows by applying Lemma~\ref{lem:inclusion} to $\ov{A}$ and $\ov{B}$.
\begin{lemma}\label{lem:fpi}
For every complex polynomial $P$ of degree $\geq 1$ and every \emph{compact}
%\footnote{We do not know if Lemma~\ref{lem:fpi} still holds if we replace the compact $K$ by a bounded subset $X$.}
 subset $K$ of $\C$, $P^{-1}(\fil{K})=\fil{P^{-1}(K)}$.
\end{lemma}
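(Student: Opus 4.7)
The plan is to reformulate both sides in terms of the unbounded component of $\C\setminus K$. Let $U$ denote that unbounded component and $W_i$ the bounded components, so that $\fil{K}=\C\setminus U$ and $\C\setminus K=U\sqcup\bigsqcup_i W_i$. Taking preimages,
\[
\C\setminus P^{-1}(\fil{K})=P^{-1}(U),\qquad \C\setminus P^{-1}(K)=P^{-1}(U)\sqcup\bigsqcup_i P^{-1}(W_i).
\]
Write $V^*$ for the unbounded component of $\C\setminus P^{-1}(K)$, so that $\fil{P^{-1}(K)}=\C\setminus V^*$. The lemma reduces to proving the equality $P^{-1}(U)=V^*$.

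The inclusion $V^*\subset P^{-1}(U)$ is straightforward. Since $P$ is a polynomial it is proper, so each $P^{-1}(W_i)$ is bounded. The connected, unbounded set $V^*$ lies in $\C\setminus P^{-1}(K)$; by connectedness it is contained in a single piece of the partition displayed above, and being unbounded it must sit inside $P^{-1}(U)$.

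For the opposite inclusion it suffices to show that every connected component $V$ of the open set $P^{-1}(U)$ is unbounded, since any such $V$ is then contained in the unique unbounded component $V^*$ of $\C\setminus P^{-1}(K)$. Suppose, for contradiction, that some component $V$ is bounded. Standard arguments using local connectedness of $\C$ give that $V$ is open and that $\partial V\subset\C\setminus P^{-1}(U)$; combined with the fact that $P(\partial V)\subset\ov U$ cannot meet the open sets $W_i$, this yields $\partial V\subset P^{-1}(K)$. Since $P$ is an open map, $P(V)$ is a non-empty bounded open subset of $U$, and the inclusion $\ov{P(V)}\subset P(\ov V)=P(V)\cup P(\partial V)$ gives $\partial P(V)\subset P(\partial V)\subset K$. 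Picking any $p\in P(V)$ and using the path-connectedness of the open connected set $U$, one joins $p$ to a point $q\in U$ lying far enough from the origin to be outside the bounded set $P(V)$: the path exits $P(V)$ at a first point, which must lie in $\partial P(V)\subset K$, contradicting the fact that the path stays in $U\subset\C\setminus K$.

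The main obstacle is this second direction: ruling out bounded components of $P^{-1}(U)$ requires combining the openness of $P$, its properness, and the path-connectedness of $U$ in a single topological argument.
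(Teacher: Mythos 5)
Your proof is correct, and it shares the paper's framing (reduce everything to the identity $P^{-1}(U)=V^*$, where $U$ is the unbounded component of $\C\setminus K$, and prove the easy inclusion by pushing the unbounded connected set forward), but the key direction is handled by a genuinely different argument. The paper shows that every component of $P^{-1}(U)$ is unbounded by invoking the covering structure: $P$ restricted to $P^{-1}(U)$ is a finite-degree ramified cover of $U$, so each component surjects onto the unbounded set $U$; this also yields, as a bonus, that $P^{-1}(U)$ is \emph{connected}. You instead rule out a bounded component $V$ by pure plane topology: $\partial V\subset P^{-1}(K)$ forces $\partial P(V)\subset K$, and a bounded open subset of $U$ with boundary in $K$ is impossible because the path-connected unbounded set $U$ avoids $K$ --- this is essentially the paper's later Lemma~\ref{lem:partial} ($\partial Y\subset X\implies \ov{Y}\subset\fil{X}$) applied to $Y=P(V)$, $X=K$, rediscovered on the spot. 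Your route is more elementary, using only that $P$ is open, continuous and proper (properness is genuinely needed twice: to make $P^{-1}(K)$ compact so that $V^*$ is well defined and unique, and to make the preimages of the bounded components $W_i$ bounded), whereas the paper's route is shorter given that Proposition~\ref{prop:confluent} machinery is already in place and gives the extra information that $P^{-1}(U)$ has a single component. Both are complete; your exit-point argument along a path from $p$ to a distant $q\in U$ is sound since $P(V)$ is open, so the first point of the path outside $P(V)$ lies on $\partial P(V)\subset K$, contradicting that the path stays in $U$.
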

\begin{proof} Let $U$ be the unbounded component of $\C\setminus K$, so that $\fil{K} = \C\setminus U$. Let $V$ be the unbounded component of $\C\setminus P^{-1}(K)$. The lemma is equivalent to $V=P^{-1}(U)$. As $V$ is connected, $P(V)$ is connected, disjoint from $K$ and unbounded thus $P(V)\subset U$, i.e.\ $V\subset P^{-1}(U)$.
Since $P$ is a finite degree ramified covering and $U$ is open, its restriction to $P^{-1}(U)$ is a finite degree ramified covering over $U$. Each component of $P^{-1}(U)$ will map surjectively to $U$, hence is unbounded, so there is exactly one component of $P^{-1}(U)$. Thus $P^{-1}(U)$ is unbounded and connected and it does not intersect $P^{-1}(K)$ (as $U$ does not intersect $K$). It follows that $P^{-1}(U)\subset V$.
\end{proof}

\begin{corollary}\label{cor:preim}
 For every complex polynomial $P$ of degree $\geq 1$ and every bounded subset $X$ of $\C$, $P^{-1}(\realm{X})=\realm{P^{-1}(X)}$.
\end{corollary}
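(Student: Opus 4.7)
The plan is to reduce to Lemma~\ref{lem:fpi} by showing that taking preimages under $P$ commutes with closure for bounded sets. Since $\realm{X}=\fil{\ov{X}}$ by definition and $\ov{X}$ is compact, Lemma~\ref{lem:fpi} gives
\[
P^{-1}(\realm{X}) \;=\; P^{-1}(\fil{\ov{X}}) \;=\; \fil{P^{-1}(\ov{X})}.
\]
On the other hand $\realm{P^{-1}(X)} = \fil{\ov{P^{-1}(X)}}$, so it is enough to establish
\[
P^{-1}(\ov{X}) \;=\; \ov{P^{-1}(X)}.
\]
Note that $P^{-1}(X)$ is bounded (since $X$ is bounded and $P$ is proper on $\C$), so its closure and filling are well-defined.

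For the inclusion $\ov{P^{-1}(X)} \subset P^{-1}(\ov{X})$, I would just use that $P$ is continuous, so $P^{-1}(\ov{X})$ is closed and contains $P^{-1}(X)$. For the reverse inclusion, I would use that $P$ is an open map: given $y \in P^{-1}(\ov{X})$, pick $x_n \in X$ with $x_n \to P(y)$; for any $\epsilon>0$, the image $P(B(y,\epsilon))$ is an open neighborhood of $P(y)$, hence contains $x_n$ for $n$ large, and thus there exists $y_n \in B(y,\epsilon)$ with $P(y_n)=x_n \in X$. Diagonalizing gives a sequence in $P^{-1}(X)$ converging to $y$, so $y \in \ov{P^{-1}(X)}$.

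Combining, $P^{-1}(\realm{X}) = \fil{P^{-1}(\ov{X})} = \fil{\ov{P^{-1}(X)}} = \realm{P^{-1}(X)}$, which is the desired equality. The only potentially delicate step is the openness argument; it is standard but should be spelled out because openness of $P$ is exactly what fails for a general bounded $X$ in Lemma~\ref{lem:fpi} — here we save the situation by passing from $X$ to the compact set $\ov{X}$ before invoking that lemma, and using openness only to swap $\ov{\,\cdot\,}$ with $P^{-1}$.
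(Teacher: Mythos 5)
Your proof is correct and follows essentially the same route as the paper: reduce to Lemma~\ref{lem:fpi} applied to the compact set $\ov{X}$, and justify $P^{-1}(\ov{X})=\ov{P^{-1}(X)}$ by continuity of $P$ for one inclusion and openness of $P$ for the other. You merely spell out the openness argument in more detail than the paper does.
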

\begin{proof} By definition, $\realm{P^{-1}(X)} = \fil{\ov{P^{-1}(X)}}$. Next, $\ov{P^{-1}(X)} = P^{-1}(\ov{X})$ (here the inclusion $\subset$ holds because $P$ is continuous, while $\supset$ holds because $P$ is an open map). Now apply Lemma~\ref{lem:fpi} with $K=\ov{X}$.
\end{proof}

The following two lemmas aim at proving Proposition~\ref{prop:chapo} below.
\begin{lemma}\label{lem:partial}
 If $X,Y\subset \C$, $X$ is compact, $Y$ is bounded and $\partial Y\subset X$ then $\ov{Y}\subset\fil{X}$.
\end{lemma}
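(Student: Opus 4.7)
The plan is to factor the proof through the intermediate statement $\ov{Y}\subset\fil{\partial Y}$. Once this is established, Lemma~\ref{lem:incl} applied with $A=\partial Y$ and $B=X$ immediately gives $\fil{\partial Y}\subset\fil{X}$: the hypothesis $\partial Y\subset X$ yields $\partial Y\subset\fil{X}$ (both are bounded, since $\partial Y\subset\ov{Y}$ is bounded), and the lemma upgrades this to $\fil{\partial Y}\subset\fil{X}$. Combined with $\ov Y\subset\fil{\partial Y}$, this gives $\ov{Y}\subset\fil{X}$.

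To prove $\ov{Y}\subset\fil{\partial Y}$, let $U$ denote the unbounded component of $\C\setminus\partial Y$, so that $\fil{\partial Y}=\C\setminus U$. The key observation is the decomposition
\[
\C\setminus\partial Y \;=\; \ins{Y}\;\sqcup\;(\C\setminus\ov{Y}),
\]
valid for any subset $Y$ of $\C$, since $\partial Y=\ov{Y}\setminus\ins{Y}$. Both pieces on the right are open, and they are disjoint. Hence $U$, being connected, is entirely contained in one of them. Since $Y$ is bounded but $U$ is unbounded, necessarily $U\subset\C\setminus\ov{Y}$, i.e.\ $U\cap\ov{Y}=\emptyset$. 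This gives $\ov{Y}\subset\C\setminus U=\fil{\partial Y}$, as needed.

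No significant obstacle appears; the only mildly subtle step is recognizing the clean disjoint decomposition $\C\setminus\partial Y=\ins{Y}\sqcup(\C\setminus\ov{Y})$, after which the standard fact that a connected subset of the union of two disjoint open sets lies in a single one of them closes the argument.
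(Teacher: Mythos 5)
Your proof is correct, but it takes a genuinely different route from the paper's. The paper argues directly by contradiction on the unbounded component $A$ of $\C\setminus X$: if $Y$ met $A$, then since $A$ is connected, unbounded, and not contained in the bounded set $Y$, the set $Y\cap A$ would be a nonempty proper subset of $A$ and hence would have a nonempty boundary relative to $A$; any such boundary point lies in $\partial Y\subset X$ yet also in $A\subset\C\setminus X$, a contradiction. You instead isolate the clean general fact $\ov{Y}\subset\fil{\partial Y}$ via the disjoint open decomposition $\C\setminus\partial Y=\ins{Y}\sqcup(\C\setminus\ov{Y})$, and then transfer to $X$ by Lemma~\ref{lem:incl}. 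Your argument is more modular (it reuses Lemma~\ref{lem:incl} rather than redoing a connectedness argument from scratch) and the intermediate statement $\ov{Y}\subset\fil{\partial Y}$ is a reusable fact in its own right; it also sidesteps the mildly delicate relative-boundary reasoning, replacing it with the standard observation that a connected set inside a union of two disjoint open sets lies in one of them. The paper's proof, on the other hand, is self-contained and works with $X$ directly without passing through $\fil{\partial Y}$. Both are complete; the only points worth double-checking in yours are the degenerate case $\partial Y=\emptyset$ (then $Y=\emptyset$ by connectedness of $\C$, and the claim is vacuous) and the boundedness hypotheses of Lemma~\ref{lem:incl}, both of which hold.
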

\begin{proof}Since $\fil{X}$ is closed, it suffices to show that $Y \subset \fil{X}$. Let $A$ be the unbounded component of $\C\setminus X$. If $Y \not\subset \fil{X}$ then $Y\cap A \neq\emptyset$. As $Y$ is bounded, $A\not\subset Y$ and as $A$ is connected there exists $z\in A$ in the boundary of $Y\cap A$ relative to $A$. This point is also in the boundary of $Y$ relative to $\C$, hence in $X$ by the assumption. Thus $z\in X\cap A=\emptyset$, which is a contradiction. 
\end{proof}

\begin{lemma}\label{lem:fullcompact}
The connected components of a full compact set are full.
\end{lemma}
\begin{proof} Let $X$ be a full compact set, so $\fil{X}=X$. Let $C$ be a component of $X$. Assume by way of contradiction that there exists a bounded component $U$ of $\C\setminus C$. As $C$ is closed, $\partial U\subset C \subset X$. Hence $U$ is a bounded open set whose boundary is contained in $X$. By Lemma~\ref{lem:partial}, $\ov{U}\subset \fil{X} =X$. Now $\ov{U}\cup C$ is a connected subset of $X$, contradicting the definition of $C$.
\end{proof}

\begin{proposition}\label{prop:chapo}
Let $X$ be a compact subset of $\C$. For every connected component $C$ of $\fil{X}$,
\[C=\fil{X\cap C}.
\]
\end{proposition}
\begin{proof}
Let us prove the two inclusions $C\supset\fil{X\cap C}$ and $C\subset\fil{X\cap C}$.
\\
{$\supset$:} $X\cap C \subset C\subset\fil{C}$ so by Lemma~\ref{lem:inclusion}, $\fil{X\cap C}\subset \fil{C}$. The set $\fil{X}$ is full so by Lemma~\ref{lem:fullcompact}, $\fil{C}=C$.
\\
{$\subset$:} Let $z\in C$. If $z\in X$, then $z\in X\cap C\subset\fil{X\cap C}$. Therefore we focus on the case where 
 $z\notin X$. Since $z\in C\subset \fil{X}$ it follows that $z$ belongs to a bounded component $A$ of $\C\setminus X$.
Recall that $C$ is the connected component of $\fil{X}$ that contains $z$. Now $A$ is connected, contains $z$ and is contained in $\fil{X}$, hence $A\subset C$. Since $C$ is closed, $\partial A \subset C$, so $\partial A\subset X\cap C$. Moreover $A \cap (X\cap C) = \emptyset$. Hence $A$ is a connected component of the complement of $X\cap C$. Since $A$ is bounded, this proves $A\subset \fil{X\cap C}$.
\end{proof}

\subsection{Backward images}
The content of this section is well known; the proofs are included here for completeness. Let $P$ be a polynomial of degree $\ge 2$.

\begin{proposition}\label{prop:confluent:1}
Let $K$ be a full and non-empty compact subset of $\C$. Let $L$ be any connected component of $P^{-1}(K)$. Then there exist Jordan domains $U$, $V$ containing respectively $L$ and $K$ such that the restriction $P:U\to V$ is a ramified covering, satisfies $P^{-1}(K)\cap U=L$ and such that there is no critical value of $P$ in $V \setminus K$.
\end{proposition}
\begin{proof} 
The set $K$ has a basis of neighborhoods in $\C$ that are Jordan domains; this can be proven for instance by using a conformal map $\phi$ from $\C\setminus K$ to $\C\setminus\ov{\D}$ when $K$ is not a single point, and taking the neighborhoods $\C\setminus\phi^{-1}(\C\setminus B(0,1+\epsilon))$ which, having a connected complement, are simply connected. As there are only finitely many critical values, we can take such a neighborhood $V$ of $K$ without any in $V\setminus K$. Let $U$ be the connected component of $P^{-1}(V)$ containing $L$. Then $U$ is simply connected.
Its boundary is a Jordan curve, near which points on one side map to $V\setminus K$, and points on the other side map to $\C\setminus\ov V$. Moreover $P^{-1}(K)$ is disjoint from $\partial U$.
From the last two assertions, it follows that only one connected component of $U\setminus P^{-1}(K)$ has a closure that meets $\partial U$.
Now the map $P:U \to V$ is a ramified covering, whose restriction to $U\setminus P^{-1}( K)$ is an unramified finite degree covering of $V\setminus K$. Therefore, since $V\setminus K$ is a topological annulus (cf.\ the isomorphism $\phi$), the same must be true for each connected component of $U\setminus P^{-1}(K)$. Now in fact there is only one such connected component: indeed each of them surjects to $V\setminus K$ under $P$ thus has points very close to the Jordan curve $\partial U$.
This implies that $U\cap P^{-1}( K)$ is connected and equal to $L$.
\end{proof}

\begin{figure}
\begin{tikzpicture}
\node at (0,0) {\includegraphics[height=150pt]{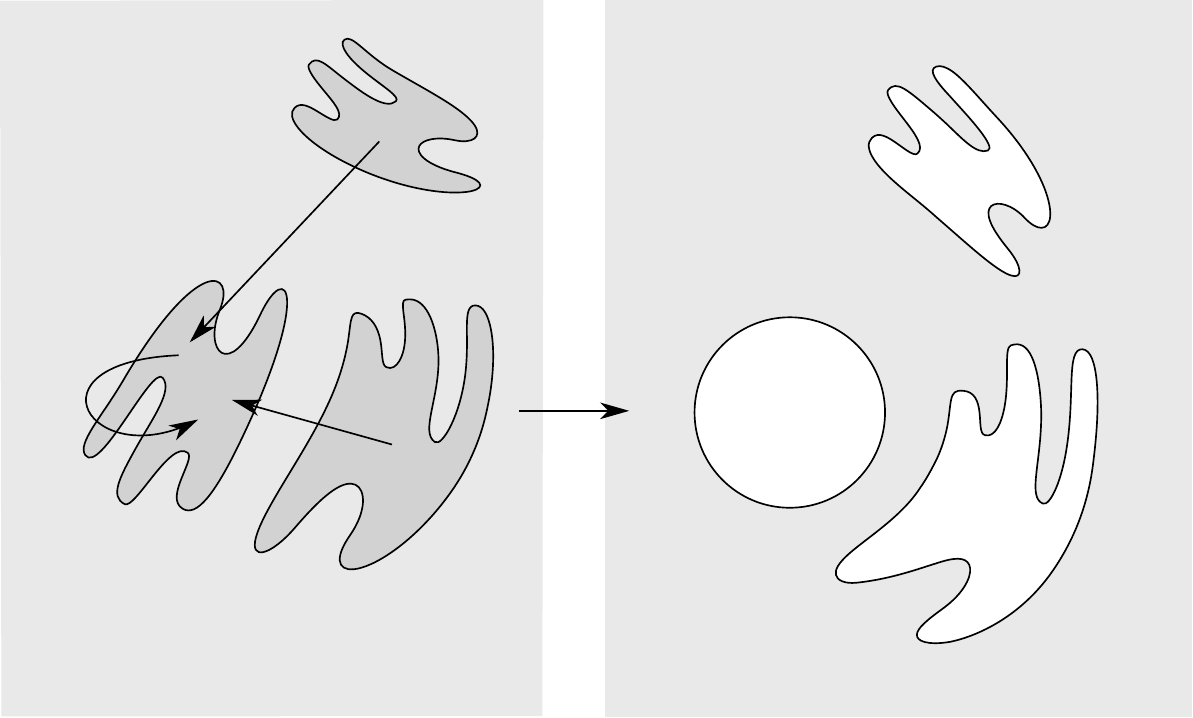}};
\node at (-3.5,0.25) {$f$};
\node at (-0.15,-0.05) {$\phi$};
\draw[->] (4.5,2.2) to[out=20,in=70,looseness=3] node[above] {$g$} (4,2.7);
\end{tikzpicture}
\caption{Schematic illustration of the construction of the external map. Imagine a polynomial fixing a Jordan domain whose preimage has $3$ disjoint connected components, including itself.
In the example shown above, they are at some distance from the Jordan domain, and the external map is well defined.
See Figures~\ref{fig:c4b} and~\ref{fig:dragons} for cases where this fails.
The domain of definition of the induced map $g$ on the right is the complement of the white set.}
\label{fig:c4}
\end{figure}

\begin{lemma}\label{lem:l}
Let $K$ and $L$ be as in Proposition~\ref{prop:confluent:1}. 
\begin{enumerate}
\item\label{item:l:1} If $L$ contains no critical point then $P$ is a homeomorphism from $U$ to $V$ carrying $L$ onto $K$.
\item\label{item:l:2} If $P:L\to K$ has a single critical value $v$ then $P^{-1}(v)\cap L$ is a singleton (thus $L$ contains a unique critical point).
%\item\label{item:l:3} If $K$ contains at most one critical value $v$ of $P$ then $L$ contains at most one critical point and $P^{-1}(v)\cap L$ is a singleton.
\end{enumerate}
\end{lemma}
\begin{proof} Consider the (possibly) ramified covering $P: U\to V$.

\eqref{item:l:1}: A ramified covering without critical point is a covering. A covering map of a simply connected set is a homeomorphism. The rest follows.

\eqref{item:l:2}: A ramified covering with a unique critical value over a simply connected domain in the plane is topologically equivalent to $z\mapsto z^d$ from $\D$ to $\D$. The rest follows.
%
%\eqref{item:l:3} follows immediately from cases~\eqref{item:l:1} and~\eqref{item:l:2}.
\end{proof}

\subsection{External map}\label{subsec:extmap}

\begin{figure}
\begin{tikzpicture}
\node at (0,0) {\includegraphics[height=150pt]{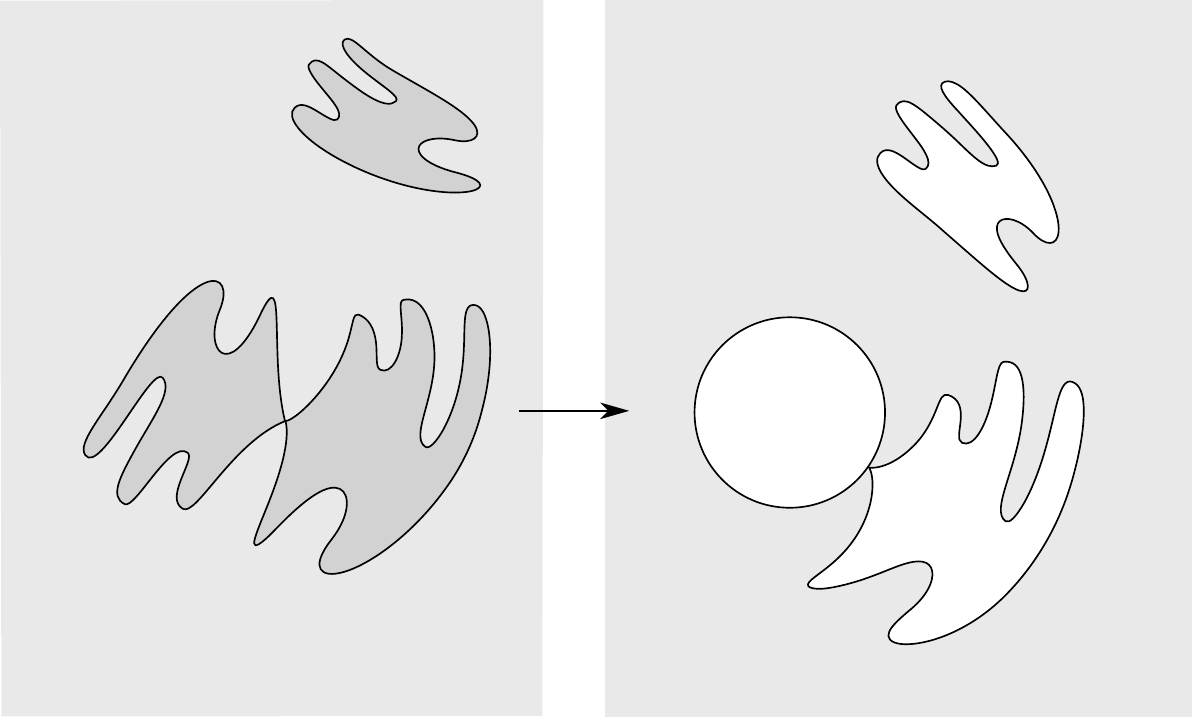}};
\node at (-0.15,-0.05) {$\phi$};
\draw[->] (4.5,2.2) to[out=20,in=70,looseness=3] node[above] {$g$} (4,2.7);
\end{tikzpicture}
\caption{Variation of Figure~\ref{fig:c4}. 
In this example, there is a critical point on the boundary and one of the preimage components touches the domain.
The external map is undefined at one point of the circle. Here the map still has a continuous extension, usually not analytic. However, one could imagine worse situations, like Figure~\ref{fig:dragons}.}
\label{fig:c4b}
\end{figure}

\begin{figure}
\begin{tikzpicture}
\node at (0,0) {\includegraphics[height=400pt]{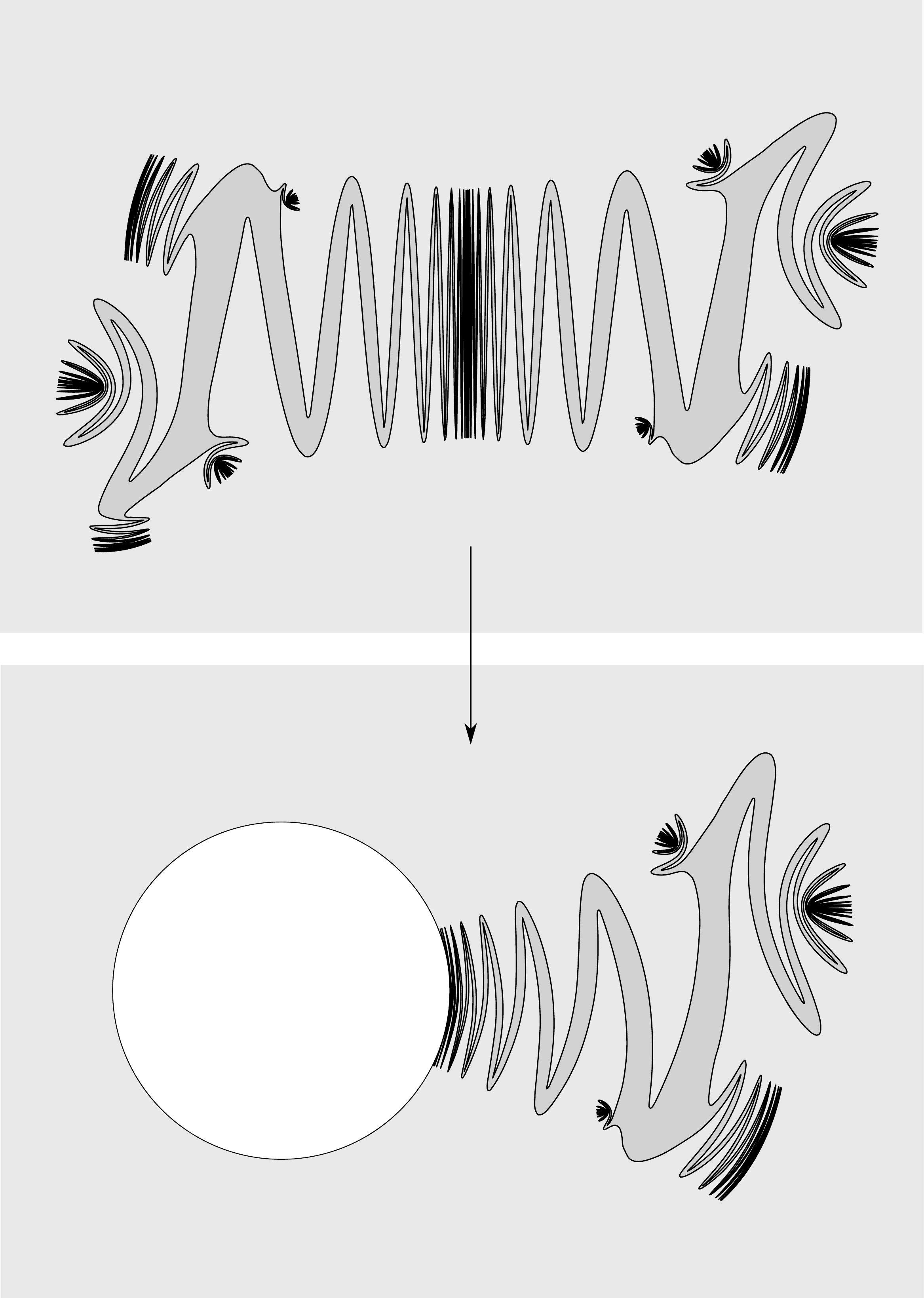}};
\node at (-0.15,0) {$\phi$};
\end{tikzpicture}
\caption{Example of how an external map could fail to be defined on a segment.}
\label{fig:dragons}
\end{figure}
This is the central tool of Ghys and Herman's proof. It has also been extensively used by Pérez-Marco in his work on hedgehogs, and by Douady and Hubbard in their work on polynomial-like maps.\footnote{They consider conjugacy classes of external maps, also known as the external \emph{classes}.} We present a particular case of the construction below, but there exist constructions in more general situations.

Consider a holomorphic map $f$ defined in an open neighborhood $\on{Dom}(f)$ of a compact subset $X$ of $\C$ and assume that $X$ is connected, contains more than one point and that $X$ is full, i.e.\ has connected complement $U=\C\setminus X$ (see also Section~\ref{sec:fill}). Assume moreover that $f(X) \subset X$ (in particular $X\subset f^{-1}(X)$) and that $X$ is \emph{locally backward invariant}, i.e.\ there is a neighborhood $U'$ of $X$ such that $f^{-1}(X) \cap U' = X$.

From the hypotheses that $X$ is non empty, compact, connected and has connected complement, we conclude that $U\cup\{\infty\}$ is a simply connected open subset of the Riemann sphere.
Let $\D\subset\C$ be the unit disk, $S^1=\partial \D$ and $V=\C\setminus\ov{\D}$.
Since $X$ has more than one point, by the Riemann mapping theorem,\footnote{To be more precise, let $s(z)=1/z$.
Then $s(U)\cup\{0\}$ is a connected non empty simply connected open subset of $\C$ with a non empty complement in $\C$.
The Riemann mapping theorem asserts that there exists a unique holomorphic bijection $\psi$ from $\D$ to $s(U)\cup\{0\}$ mapping $0$ to $0$ and such that $\psi'(0)>0$. Let $\phi=s^{-1}\circ \psi \circ s$.} there exists a holomorphic bijection $\phi$ from $V$ to $U$, with the property that $\phi(z)/z$ tends to a positive real as $z\tend \infty$. Moreover, since $\phi$ is a homeomorphism, for every sequence $(z_n)$ in $V$, $z_n\tend \partial V$ if and only if $\phi(z_n)\tend\partial U$.
Let us conjugate $f$ by $\phi$ by setting $g = \phi^{-1} \circ f \circ \phi$. The domain of definition of $g$ is the set of points in $V$ whose image by $\phi$, then by $f$, do not belong to $X$.
This domain contains the annulus $1<|z|<1+\epsilon$ for some $\epsilon>0$ because of the local backward invariance.
Also, for every sequence $(z_n)$ in $\on{Dom}(g)$, $z_n\tend S^1$ implies $g(z_n)\tend S^1$.
Therefore, the map $g$ admits a Schwarz reflection extension $\wt{g}$ to the following domain: the union of $S^1$, $\on{Dom}(g)$ and the reflection of $\on{Dom}(g)$ with respect to $S^1$.
This domain contains the annulus $1/(1+\varepsilon) < |z| < 1+\varepsilon$.
The map $\wt{g}$ is holomorphic, preserves the circle $S^1$ and commutes with the reflection $z \mapsto 1/\bar{z}$.
Its restriction to $S^1$ is called the \emph{external map} and the map $\wt{g}$ will be called the \emph{extended external map} associated with the pair $(f,X)$.
See Figures~\ref{fig:c4}, \ref{fig:c4b} and~\ref{fig:dragons}.
The external map is a non-constant orientation-preserving analytic self-map of the circle without critical points, in particular it is a covering.
This follows from the fact that $\wt{g}$ maps all points outside the circle to points outside the circle. 

\subsection{The separation theorem}\label{subsec:sep}
We will use the theory of polynomial-like maps which can be found in~\cite{aap}.

The following theorem is stated in~\cite{Kiwi} as Lemma 3.1 and as an immediate corollary of a theorem of Goldberg and Milnor.

\begin{theorem*}[Goldberg, Milnor, Poirier, Kiwi]
Let $P$ be a complex polynomial of degree $\geq 2$ and connected Julia set. Then
there exists $m>0$ such that the union of the set of external rays fixed by $P^m$ (there are finitely many of them) and their landing points cut the plane into regions, each of which contains at most one periodic Fatou component or Cremer point (but never both).
\end{theorem*}

The external rays in the above theorem are periodic under $P$. 
As such, they land, and their landing points are either repelling or parabolic periodic points. This implies:

\begin{corollary}\label{cor:sep}
Let $P$ be a degree $>1$ complex polynomial with connected \emph{or disconnected} Julia set. Let $U, V$ be distinct \emph{periodic} bounded 
Fatou components of $P$, and $\realm{U}=\fil{\ov{U}}$, $\realm{V}=\fil{\ov{V}}$. Then $\realm{U}\cap \realm{V}$ is either empty or reduced to a periodic point. In the latter case, this point is either parabolic or repelling.
\end{corollary}
\begin{proof}
This is an immediate consequence of the separation theorem in the case the Julia set $J(P)$ is connected. Otherwise, we use the following observation: let $C$ be a connected component of the filled-in Julia set $K(P)$. If $P^p(C)\cap C\neq \emptyset$ then $P^p(C)=C$ and there exists a polynomial-like restriction of $P^p$ whose filled-in Julia set is $C$ (see Lemma~\ref{lem:bh} below).
% If this restriction has degree $1$, then $C$ is reduced to a repelling periodic point.
Now we choose for $C$ the component of $K(P)$ that contains $U$. Then $\realm{U}\subset C$. If $V$ is contained in a component of $K(P)$ other than $C$, then $\realm{U}\cap \realm{V}=\emptyset$. Otherwise consider an associated polynomial-like restriction of $P^p$ as above, where $p$ is the period of $C$. This restriction has degree $>1$ since otherwise $C$ would reduce to a point. Therefore it is hybrid-equivalent to a polynomial of degree $\geq 2$ map near their Julia sets (see \cite{aap}). So we can reduce the argument to the connected case as above.
\end{proof}

The following result was used in the above proof. For completeness we include its proof, borrowed from \cite{BH2}.

\begin{lemma}\label{lem:bh}
Assume $P$ is a degree $d\geq 2$ polynomial, $C$ is a connected component of the filled-in Julia set $K(P)$, and $P^p(C)\cap C\neq \emptyset$ for some $p\geq 1$. Then $P^p(C)=C$ and there exists a polynomial-like restriction of $P^p$ whose filled-in Julia set is $C$.
\end{lemma}
\begin{proof}
The set $P^p(C)$ is a connected subset of $K(P)$, so $P^p(C)\subset C$. 
The reverse inclusion follows from the \emph{confluence} property (see \cite{Why}): if $f$ is any open map of the Riemann sphere, $L$ a connected compact subset of the Riemann sphere, and $K$ a connected component of $f^{-1}(L)$ then $f$ maps $K$ onto $L$. Let $f=P^p$, $L=C$ and $K=C$. Since $K$ is a connected component of $K(P)$ contained in $f^{-1}(L)$, it is a fortiori a connected component of $f^{-1}(L)$.

Let $G: \C \to [0,+\infty[$ be the Green's potential associated with $K(P)$.
Given $x>0$, let $U_x$ be the connected component
of the set $G^{-1}([0,x[)$ containing $C$. 
Then $U_x$ is a simply connected open set.
Also, $\bigcap_{x>0} \ov{U}_x$ is connected and contained in $K(P)=G^{-1}(0)$.
It follows that this intersection is equal to $C$. In particular, for every neighborhood $V$ of $C$ in $\C$, there is some $x$ such that $U_x\subset V$.
Take now $x$ small enough so that there are no critical points of $P$ in $U_x\setminus C$. Then the filled-in Julia set of the polynomial-like restriction $P^p: U_x \to U_{d^p x}$ is connected, contained in $K(P)$ and contains $C$, so it is equal to $C$.\end{proof}

\section{Dynamics of Filled Siegel Disks}
\subsection{Setting up}\label{subsec:setup}Denote by $P$ a polynomial of degree $\geq 2$ with a Siegel disk $\Delta$. 
Let (see Section~\ref{sec:state} and Section~\ref{sec:fill} notation~\ref{n:hat})
 \[\realm{\Delta}=\fil{\ov{\Delta}}.\]
Thus, $\realm{\Delta}$ is the disjoint union of $\Delta$, $\partial\Delta$ and at most countably many other bounded components of $\C\setminus\partial\Delta$, if any.\footnote{As we remarked earlier, there are examples of holomorphic maps defined on a simply connected open subset $U$ of $\C$ with a Siegel disk compactly contained in $U$ and whose boundary is not locally connected (for instance a pseudocircle, see \cite{C1}). Yet, no such examples are known with a Siegel disk having a boundary with more than two complementary components. Like chimaeras, they may not exist.}
% During this work we couldn't rule out extra components, their status is comparable to that of ghost limbs of the Mandelbrot set $M$, which have been proven not to exist by Yoccoz, and also to that of queer components of $M$, for which existence is not believed to occur and non-existence unproven yet.}

Recall that we are not only considering fixed Siegel disks but also Siegel disks with a higher period. Let $p$ be the period of the Siegel disk $\Delta$.

For $k\geq 0$, let
\[ \Delta^k=P^k(\Delta)
\]
so the orbit of $\Delta$ is $\Delta=\Delta^0\mapsto\Delta^1\mapsto\cdots\mapsto\Delta^{p-1}\mapsto\Delta^p=\Delta$.

Let
\[\realm{\Delta}^k = \fil{\ov{\Delta^k}}.
\]
\begin{remark}
We used the notation $\realm{\Delta}^k$ instead of $\realm{\Delta^k}$ to avoid extra wide hats like in $\realm{\Delta^{k+1}}$. However, the set $\fil{\ov{P^k(\Delta)}}$ is a priori not equal to $P^k(\fil{\ov{\Delta}})$, so attention should be paid to the fact that $\realm{\Delta}^k$ does \emph{not} denote the latter.
\end{remark}

Applied to our situation, Corollary~\ref{cor:sep} (separation theory) implies:
\begin{corollary}\label{cor:sep:2}\ 
\begin{itemize}
\item For $l\neq k \pmod{p}$, $\realm{\Delta}^k \cap \realm{\Delta}^l$ is either empty or a single point $z$, in which case $z$ is a repelling or parabolic periodic point on which at least two external rays $\gamma$, $\gamma'$ land. Moreover, $\gamma\cup\{z\}\cup\gamma'$  separates $\realm{\Delta}^k\setminus\{z\}$ from $\realm{\Delta}^l\setminus\{z\}$.
\item A critical point can belong to at most one $\realm{\Delta}^k$.
\end{itemize}
\end{corollary}

\subsection{Dynamics}\label{subsec:dyn}
Let $A$ denote the basin of attraction of infinity for $P$. Recall that the Julia set $J$ is equal to the boundary $\partial A$ and that the Fatou set is the complement of $J$. Siegel disks of rational maps are components of the Fatou set, thus
\[ \partial \Delta^k \subset J.\]

Recall that for a rational map $F$ and a Fatou component $U$ of $F$, $F(U)$ is a Fatou component of $F$ and $F(\partial U) = \partial F(U)$.
Hence%, 
%\[ \partial \Delta^k \subset J \hbox{ and } 
$$P(\partial \Delta^k)=\partial \Delta^{k+1}.$$
%\]

\begin{lemma} \label{l:Fatou} \ % LaTeX trick forcing linebreak
\begin{itemize}
\item Any non-empty bounded and connected open subset of $\C$ whose boundary is contained in $J$ is a Fatou component.\footnote{Note that this is false if we do not assume boundedness: consider for instance the complement of the closure of any bounded Fatou component of Douady's rabbit.}
\item Bounded components of $\C\setminus \partial \Delta^k$ are Fatou components.
\end{itemize}
\end{lemma}
\begin{proof}
 First claim: call this set $U$. If $U$ has non empty intersection with a Fatou component $V$ then $V\subset U$: indeed $U$ is open and $\partial U\subset J$ therefore $U\cap V$ is both open and closed in $V$.
Now $U$ cannot contain points in $J=\partial A$ (where $A$ is the basin of infinity), for otherwise as an open set it would contain points in $A$, and therefore it would contain $A$ and be unbounded. So $U$ is contained in a Fatou component,\footnote{Alternatively, $\partial U\subset J$ which is bounded and invariant; hence the iterates of $f$ are bounded on $\partial U$ thus on $U$ by the maximum principle; whence $U$ is contained in the Fatou set.} and thus is equal to it.
\\
 Second claim: follows from the first and the inclusion $\partial \Delta^k\subset J$.
\end{proof}

% Recall that $\realm{\Delta}^k$ denotes $\realm{P^k(\Delta)}$.

\begin{lemma}\label{lem:oubli}
For every $k$, $\partial\realm{\Delta}^k = \partial\Delta^k$.
\end{lemma}
\begin{proof} Without loss of generality, we prove the case of $\Delta^0=\Delta$.
For any compact subset $X$ of $\mathbb C$, $\partial \fil{X} \subset \partial X$, hence $\partial\realm{\Delta} = \partial \fil{\ov{\Delta}} \subset \partial \ov\Delta\subset\partial \Delta$. The other inclusion is more specific to $\Delta$: $\partial \Delta\subset J$ hence $\partial\Delta\subset\ov{A}$.
%where $A$ is the basin of attraction of infinity of $P$.
Together with $\partial \Delta\subset\realm{\Delta}$ and $A\cap\realm{\Delta} =\emptyset$, this implies $\partial\Delta \subset\partial\realm{\Delta}$.
\end{proof}

\begin{lemma}\label{lem:dsp}
The image of a bounded component of $\C\setminus \partial\Delta^k$ is a bounded component of $\C\setminus \partial\Delta^{k+1}$.
In particular,
\[ P\left(\realm{\Delta}^k\right)\subset\realm{\Delta}^{k+1}.
\]
\end{lemma}
\begin{proof}
Consider a bounded component $U$ of $\C\setminus\partial\Delta^{k}$ so $\partial U\subset \partial \Delta^k$. We have\footnote{If $f:X\to Y$ is a continuous and open map between topological spaces and $U$ is an open subset of $X$ with compact closure, then $\partial f(U)\subset f(\partial U)$.} $\partial P(U)\subset P(\partial U)$, and as already mentioned $P(\partial \Delta^{k})=\partial \Delta^{k+1}$, thus $\partial P(U)\subset \partial \Delta^{k+1}$.
By Lemma~\ref{l:Fatou}, $U$, thus $P(U)$, are contained in the Fatou set. In particular, $P(U)\cap \partial \Delta^{k+1} = \emptyset$. Now $P(U)$ is an open connected subset of the complement of $\partial \Delta^{k+1}$ whose boundary is contained in $\partial \Delta^{k+1}$, hence it is a connected component of $\C\setminus \partial \Delta^{k+1}$.
\end{proof}

\subsection{Backward images toward local total invariance}\label{subsec:aprop}
%Recall that $\realm{\Delta}^k$ is defined as $\fil{\ov{\Delta^k}}$.
The sets that we are about to define will be used here and later in the text.

 \begin{definition}\label{d:apo}
Let $\apo{\Delta}^k$ be the connected component of $P^{-1}(\realm{\Delta}^{k+1})$ that contains $\realm{\Delta}^{k}$.
%Let $n_0$ be the number of critical points of $P$ belonging to $\bigcup_{k\geq 0} \apo{\Delta}^k$, counted \emph{without} multiplicity and $n_1$ be the number of critical values in $\bigcup_{k\geq 0} \realm{\Delta}^k$, also counted without multiplicity.
\end{definition}

\begin{definition}[The sets $U_k$ and $V_k$]\label{def:uv}
Applying Proposition~\ref{prop:confluent:1} to $K=\realm{\Delta}^k$, there are simply connected open neighborhoods $U_{k-1}$, $V_k$ of $\apo{\Delta}^{k-1}, \realm{\Delta}^k$ respectively, for $k\in \{1,\cdots, p\}$, such that the restriction $P\vert_{U_{k-1}} : U_{k-1}\to V_{k}$ is a ramified covering satisfying $P^{-1}( \realm \Delta^{k}) \cap U_{k-1}=\apo\Delta^{k-1}$ and such that there is no critical value of $P$ in $V_k \setminus\realm{\Delta}^{k}$. Let us define $U_{k+p}=U_k$ and $V_{k+p}=V_k$ for all $k\in\Z$. Note that there is no a priori inclusion between $U_k$ and $V_k$.
\end{definition}

\begin{corollary}\label{cor:zero}
If $\apo{\Delta}^k$ does not contain a critical point then $\apo{\Delta}^k=\realm{\Delta}^k$ i.e.\ $\realm{\Delta}^k$ is a connected component of $P^{-1}(\realm{\Delta}^{k+1})$. Moreover, $P$ is a bijection from $\realm{\Delta}^k$ to $\realm{\Delta}^{k+1}$.
\end{corollary}
\begin{proof}By Lemma~\ref{lem:l} case~\eqref{item:l:1}, $P$ is a bijection from $\apo{\Delta}^k$ to $\realm{\Delta}^{k+1}$. In particular $P$ is injective on $\partial \Delta^k$. Recall that $P(\partial \Delta^k)=\partial \Delta^{k+1}$.
Hence \[P^{-1}(\partial \Delta^{k+1})\cap \apo{\Delta}^k = \partial \Delta^k.\]
Now consider a point $z\in\apo{\Delta}^k$. Then $P(z)\in\realm{\Delta}^{k+1}$. Either $P(z)\in\partial \Delta^{k+1}$ or $P(z)$ belongs to a bounded component $W$ of $\C\setminus \partial \Delta^{k+1}$.
In the first case, $z\in\partial \Delta^k$ thus $z\in\realm\Delta^k$.
In the second case, consider the component $W'$ of $P^{-1}(W)$ that contains $z$. Since $P(\partial W')\subset\partial W\subset \partial\Delta^{k+1}$, we get $\partial W'\subset\partial \Delta^k$. Hence $W'$ is a bounded connected component of $\C\setminus\partial\Delta^k$: $W'\subset \realm{\Delta}^k$.
\end{proof}

\begin{corollary}\label{lem:locbwinv}
If $\apo{\Delta}^k=\realm{\Delta}^{k}$ for every $k$, then $\realm{\Delta}$ is \emph{locally backward invariant} under $P^p$, i.e.\ 
there is a neighborhood $W$ of $\widehat{\Delta}$ such that $P^{-p}(\widehat{\Delta}) \cap W = \widehat{\Delta}$.
%there is a neighborhood of $\realm{\Delta}$ in which the preimage of $\realm{\Delta}$ by $P^p$ is $\realm{\Delta}$.
\end{corollary}
\begin{proof}
By the assumption, for each $k$ the restriction $P: U_{k-1} \to V_k$ pulls $\realm{\Delta}^k$ back to $\realm{\Delta}^{k-1}$. It is then not hard to check that the open set
\[ W=\bigcap_{k=0}^{p-1} P^{-k}(U_k)
\]
has the required property.
\end{proof}

The converse also holds (but will not be used in this article). Note that under the assumptions of the last statement, $P^p(\realm{\Delta}) = \realm{\Delta}$ so $\realm{\Delta}$ is in fact locally totally invariant.

\subsection{Shielded components eventually map to the Siegel disk}\label{subsec:unl}

Recall that a component shielded by $\partial \Delta^k$ is a Fatou component and is mapped to a component shielded by $\partial \Delta^{k+1}$ (Lemma~\ref{lem:dsp}). The following statement is proved in~\cite{Rogers}. We reproduce its proof here for convenience.

\begin{lemma}[Rogers, Theorem 3.3 in \cite{Rogers}]\label{lem:rog}
 All components shielded by $\partial \Delta^k$ eventually map under the iteration of $P$ to some (hence to every) $\Delta^l$.
\end{lemma}
\begin{proof}
 Let $U$ be a component shielded by $\partial \Delta^k$. By Lemma~\ref{l:Fatou}, $U$ is a Fatou component. By Sullivan's no-wandering-domain theorem, it is eventually mapped to a periodic component $V=P^n(U)$.
If $V$ were none of the $\Delta^l$, then by Corollary~\ref{cor:sep} (consequence of the separation theorem)
$\partial V$ would intersect each $\partial \Delta^l$ in at most one point. 
However, $\partial V\subset \bigcup_k\partial\Delta^k$ and $\partial V$ is infinite. Contradiction.
\end{proof}

\begin{complem}\footnote{Rogers proved a stronger statement in \cite{Rogers} Theorem 8.4: if $P$ is injective on the boundary of a Siegel disk of period one, then $\Delta$ is the only component shielded under $\partial \Delta$. It implies our Special Case because here $P^p$ is injective on $\partial \Delta^k$, see Corollary~\ref{cor:zero}. %We also give a direct proof of the \addendum\ in Section~\ref{subsec:comp}.
}\label{complement} 
 If there is no critical point in $\bigcup_{k\geq 0} \apo{\Delta}^k$ then for every $m$, $\Delta^m$ is the only component shielded by $\partial \Delta^m$.
\end{complem}
\begin{proof} %of the \addendum\ to case~\ref{item:1}\label{subsec:comp}
By Corollary~\ref{cor:zero} the map $P$ is a bijection from $\realm{\Delta}^k$ to $\realm{\Delta}^{k+1}$ for every $k$. Thus $\Delta^{k+1}$ has only one preimage by $P$ in $\realm{\Delta}^k$, namely $\Delta^k$. Any component $V$ shielded by $\partial \Delta^m$ has its orbit contained in $\bigcup_k\realm{\Delta}^k$, and eventually maps to some $\Delta^l$ by Lemma~\ref{lem:rog}. Thus $V=\Delta^m$.\end{proof}
Note that this result makes no assumption on the number of critical values of $P$.

\subsection{Proof of Corollary~\ref{cor:p} (Herman's theorem in the periodic case)}\label{sub:sup1}

For convenience, we recall its statement:
\emph{
\begin{enumerate}
\item For every rational map $f$ of degree $\geq 2$ having a Siegel disk $\Delta$ of period $p$ with rotation number in $\cal H$, there exists an $i$ between $0$ and $p-1$ such that $f$ is not injective in any neighborhood of $\partial f^i(\Delta)$.
\item For every unicritical polynomial $f(z)=z^d+c$ having a Siegel disk $\Delta$ of period $p$ and with rotation number in $\cal H$, there exists an $i$ between $0$ and $p-1$ such that $f$ has a critical point on $\partial f^i(\Delta)$.
\end{enumerate}%
}%
\noindent Let us also recall the following equivalence (stated on page~\pageref{rem:one} as a remark just after Herman's theorem), where $K$ is a compact subset of the domain of definition of a holomorphic map $f$.

\emph{The following are equivalent:
\begin{enumerate}[label=(\roman*)]
\item There is no neighborhood of $K$ on which $f$ is injective.
\item Either $f$ has a critical point on $K$ or the restriction of $f$ to $K$ is non-injective (or both).
\end{enumerate}
} 

Assume $p>1$, since the case $p=1$ is already covered by Herman's theorem. Let $\Delta^i=f^i(\Delta)$. 
As recalled in Section~\ref{subsec:dyn}, $f^i(\partial \Delta) =\partial \Delta^i$. 
We treat the two parts of the theorem separately:
%Here, only the inclusion of the left hand side in the right hand side will be used.
%Let us see what Herman's theorem applied to $f^p$ tells us:

Part~\ref{item:cor:p:1} ($f$ is a rational map). Herman's theorem applied to $f^p$ shows that $f^p$ is not injective in any neighborhood of $\partial\Delta$. By the remark above, this implies that
either there is a critical point of $f^p$ on $\partial \Delta$ or that $f^p$ is non-injective on $\partial\Delta$. In the first case, $f$ has a critical point on $\Delta^i$ for some $i$ between $0$ and $p-1$. In the second case, $f$ is non-injective on $\partial \Delta^i$ for some $i$ between $0$ and $p-1$. Applying the remark again, we conclude that in either case $f$ is not injective in any neighborhood of $\partial \Delta^i$.

Part~\ref{item:cor:p:2} ($f$ is a unicritical polynomial). We give here a proof using the following lemma.
% (It requires a good combinatorial understanding of the dynamics).
See the end of this section for an alternate proof.
\begin{lemma}[folk.]\label{lem:upl}
If $f$ is a unicritical polynomial with a period $p$ Siegel disk $\Delta$, then 
there is a unicritical polynomial-like restriction of $f^p$ whose filled-in Julia set contains $\Delta$.
\end{lemma}
\begin{proof} We give only a brief sketch.
Consider $\Delta^i=f^i(\Delta)$ and the regions cut by the union of the external rays fixed by $f^p$ and their endpoints.
%, called \emph{fundamental regions}.
By the Fatou-Shishikura theorem,\footnote{The number of non-repelling cycles of a polynomial map is at most the number of its critical points in $\C$.} $f$ has at most one non-repelling cycle, hence all these endpoints are repelling.
By the separation theorem (see Section~\ref{subsec:sep}) applied to $f^p$, each region contains at most one $\Delta^i$. Let $V$ be the region containing $\Delta$ and $U$ be the connected component of $f^{-p}(V)$ containing $\Delta$. Then $f^p$ is a proper map from $U$ to $V$ and $U\subsetneq V$. For each distinct $i,i'$ with $0\leq i <p$ and $0\leq i' <p$, the regions containing $f^i(U)$ and $f^{i'}(U)$ are disjoint, so at most one contains a critical point. As a consequence, the restriction of $f^p$ from $U$ to $V$ is univalent or unicritical. It cannot be univalent for that would violate Schwarz's lemma at the center of the Siegel disk. Now one can cut $U$ and $V$ by equipotentials and thicken external rays and endpoints to get the desired polynomial-like restriction; see~\cite{GM} for details.
\end{proof}
Polynomial-like maps are quasiconformally conjugate to polynomials in a neighborhood of their respective filled-in Julia sets \cite{aap}. In particular in our case, $f^p$ has a restriction to an open set containing $\Delta$ which is conjugate to a unicritical polynomial.
By the period one version of Herman's theorem, $f^p$ has a critical point on $\partial \Delta$, which means that there is a critical point on $\partial \Delta^i$ for some $i$.
%\hfill Q.E.D.
This completes the proof of Corollary~\ref{cor:p}.
% Another proof of the second point goes as in Herman's proof but needs definition (Definition~\ref{d:apo} and Definition~\ref{d:fill}) and result from the sequel of the article.

\medskip

Here is an alternate proof of Part~\ref{item:cor:p:2} that avoids the use of Lemma~\ref{lem:upl}. According to Fatou (see \cite{Milnor} Theorem 11.17), the $\omega$-limit set of the critical point $0$ contains the boundary of $\Delta^i$ for every $i$. By Sullivan's no-wandering-domain theorem and the classification of Fatou components, $0$ cannot belong to a Fatou component.
Now, if $0$ is not on the boundary of any $\Delta^i$, then according to Part~\ref{item:cor:p:1} of Corollary~\ref{cor:p} and Case~\ref{item:rem:ii} of the remark applied to $f^p$ on $\partial \Delta$, $f^p$ is not injective on $\partial \Delta^i$. Equivalently there is some $i$ such that $f$ is not injective on $\partial\Delta^i$.
By Corollary~\ref{cor:zero}, the critical point $0$ must belong to $\apo{\Delta}^i$ and the critical value $P(0)$ to $\realm{\Delta}^{i+1}$. Since $P(0)$ is not in the Fatou set, this means that $P(0)\in\partial \Delta^{i+1}$. Then since $0$ is the only preimage of $P(0)$ and since $P$ is always surjective from $\partial \Delta^i$ to $\partial \Delta^{i+1}$, this means that $0\in\partial \Delta^i$, contradicting our assumption.

%%%%%%%%%%%%%%%%%%

\section{Reduction of the Main Theorem}\label{s:reduction}
We now describe our plan for the proof of Theorem~\ref{thm:main}.
The proofs of the three main steps stated below are the object of the last three sections.

%Theorem~\ref{thm:preciii} is proved in Section~\ref{s:mainproof}. 

Let $P$ be a polynomial of degree $d\ge 2$ with %two finite critical values, and 
a Siegel disk $\Delta$ of arbitrary period $p$. Let $\Delta^k=P^k(\Delta)$ and $\realm{\Delta}^k=\fil{\overline{\Delta^k}}$.
% and of rotation number in $\cal H$, there is an element in the cycle of $\Delta$ whose boundary contains a critical point.

\begin{lemma}\label{lem:precii}
 If all critical orbits eventually enter $\bigcup_{k\geq 0} \realm{\Delta}^k$ then there is a critical point on $\bigcup_{k\geq 0}\partial \Delta^k$.
\end{lemma}
\begin{proof} Denote by $\omega(z)$ the $\omega$-limit set of $z$, i.e.\ the set of points of accumulation of the sequence $(P^n(z))$. Let $B=\bigcup_{k\geq 0}\partial \Delta^k$.

Consider a critical point $c$ that belongs to the Fatou set of $P$. By Sullivan's no-wandering-domain theorem and Fatou's classification of periodic components, $\omega(c)$ is equal to an attracting cycle, a parabolic cycle, or a cycle of invariant curves in Siegel disks. The intersection $\omega(c)\cap J$ is therefore finite or empty.

Another theorem of Fatou asserts that $\partial\Delta\subset \bigcup_c \omega(c)$, the union being over all critical points.
The set $\partial\Delta$ is contained in $J$ and contains infinitely many points. 

Recall that $\realm{\Delta}^k$ is the disjoint union of $\partial\Delta^k$ and the bounded components of $\C\setminus\partial\Delta^k$, and that the latter are Fatou components (Lemma~\ref{l:Fatou}).
We have assumed that all critical orbits of $P$ eventually belong to some $\realm{\Delta}^k$. If none would fall on $B$, they would all belong to the Fatou set, thus $J\cap \big(\bigcup_c \omega(c)\big)$ would be finite 
contradicting the fact that it contains the infinite set $\partial \Delta$.

To prove that there is not only a point in the critical orbit, but also a critical point on $B$, we must use the following theorem of Mañé \cite{Mane1}, much harder than Fatou's: there exists a \emph{recurrent} critical point $c$ such that $\partial \Delta\subset\omega(c)$. As above, $P^n(c)$ must belong to $B$ for some $n\in\N$. Since $P(B)=B$, we have $\omega(c)=\omega(P^n(c))\subset B$. Since $c\in \omega(c)$, we have $c\in B$.
\end{proof}

Recall that $\apo{\Delta}^k$ is defined as the connected component of $P^{-1}(\realm{\Delta}^{k+1})$ containing $\realm{\Delta}^{k}$.

\begin{definition}
Let $n_0$ be the number of critical points of $P$ belonging to $\bigcup_{k\geq 0} \apo{\Delta}^k$, and $n_1$ be the number of critical values in $\bigcup_{k\geq 0} \realm{\Delta}^k$, both counted \emph{without} multiplicity.
\end{definition}

\begin{theorem} [Herman's Case]\label{thm:preci}If $n_0=0$ then $\theta$, the rotation number of $\Delta$, is not in $\cal H$.
\end{theorem}

Note that by the Special Case in Section~\ref{subsec:unl}, under the same hypothesis, $\Delta$ is the only component shielded by $\partial \Delta$. 

%We will need the following topological lemma, proved in Section~\ref{subsec:pflemoops}:
%probelem avec ref lem
\begin{lemma}\label{lem:oops}
If $n_1=1$ then $n_0 = 0$ or $n_0= 1$.
\end{lemma}

\begin{theorem}\label{thm:preciii}
 If $n_0=1$ and $P$ has only two critical values then there is a critical point on $\bigcup_{k\geq 0}\partial \Delta^k$.
\end{theorem}
\vskip 1ex 
\noindent{\it Proof of our Main Theorem assuming Lemma~\ref{lem:oops}, Theorem~\ref{thm:preci} and Theorem~\ref{thm:preciii}:}

Let $P$ be a polynomial of degree $d\ge 2$ with two finite critical values, and a Siegel disk $\Delta$ of arbitrary period $p$. Necessarily $n_1\le 2$. If $n_1=2$, all critical orbits eventually enter $\bigcup_{k\geq 0} \realm{\Delta}^k$ and by Lemma~\ref{lem:precii} there is a critical point on $\bigcup_{k\geq 0}\partial \Delta^k$.
 If $n_1=1$ then $n_0 = 0$ or $n_0= 1$ by Lemma~\ref{lem:oops}.
 Last, if $n_1=0$ then $n_0=0$.
If $n_0=0$, it follows from Herman's case (Theorem~\ref{thm:preci}) that $\theta$, the rotation number of $\Delta$, is not in $\cal H$.
If $n_0=1$, then Theorem~\ref{thm:preciii}
ensures that then there is a critical point on $\bigcup_{k\geq 0}\partial \Delta^k$.
\qed

\section{Proof of Herman's Case}

 The idea of the proof is the same as Herman's in \cite{H1}, which in turns generalizes a result of Ghys in \cite{G2}.
Assume that $n_0=0$.
By Corollary~\ref{cor:zero}, $\apo{\Delta}^k=\realm{\Delta}^k$ for all $k$. By Corollary~\ref{lem:locbwinv}, 
$\realm{\Delta}$ is locally backward invariant under $P^p$ in the sense of Section~\ref{subsec:extmap}, so we can consider the external map associated with the pair $(P^p,\realm{\Delta})$. 
Recall from Section~\ref{subsec:extmap} that this external map is defined as the restriction to the unit circle of the Schwarz reflection $\wt f$ of $f:=\phi\circ P^p\circ\phi^{-1}$ for the unique conformal map $\phi:\C\setminus\realm{\Delta}\to\C\setminus\ov{\D}$ such that $\phi(z)/z$ has a positive limit at $\infty$. 

\begin{lemma}
The restriction of $\wt{f}$ to $\partial \D$ is an orientation-preserving analytic diffeomorphism with rotation number $\theta$.
\end{lemma}
\begin{proof}
By Section~\ref{subsec:extmap} we already know that $\wt{f}$ is analytic and that its restriction is orientation-preserving and has no critical point. 
To prove that $\wt{f}$ has degree one
on $\partial\D$ and has the same rotation number as the Siegel disk, Herman's trick is to do the same construction as above but replacing $\realm{\Delta}$ by the invariant sub-disk $\Delta_r:=\psi(\ov{B}(0,r))$ where $r<1$ and $\psi:\D\to\Delta$ is a conformal map sending $0$ to the center of the Siegel disk, i.e.\ to the periodic point. Recall that the map $\psi$ is also a conjugacy from the rotation of angle $2\pi\theta$ to $P^p$. Let $\phi_r$ be the unique conformal map from $\C\setminus\overline{\Delta}_r$ to $\C\setminus\ov{\D}$ such that $\phi_r(z)/z$ has a positive limit at $\infty$ and let $f_r=\phi_r\circ P^p\circ \phi_r^{-1}$. Since the boundary of $\Delta_r$ is a Jordan curve, 
the map $\phi_r$ extends to a homeomorphism from $\partial\Delta_r$ to $\partial \D$. The map $\phi_r\circ\psi$ conjugates the rotation of angle $2\pi\theta$ on the circle of radius $r$ to $f_r$ on $\partial \D$. Thus \emph{the rotation number of $f_r$ is equal to $\theta$}. 
The domain $\C\setminus\overline\Delta_r$ tends to $\C\setminus\realm{\Delta}$ in the sense of Caratheodory.\footnote{Consider $r_n\tend 1$. By definition the associated \emph{kernel} is the union over $n\in\N$ of the unbounded connected component $V_n$ of the interior of $K_n=\bigcap_{k\geq n} \C\setminus\overline\Delta_{r_k}$. Trivially, $K_n=\C\setminus\Delta$, so $V_n =\C\setminus\realm{\Delta}$. Hence the kernel is $\C\setminus\realm{\Delta}$. Thus the family $\C\setminus \Delta_r$ converges in the sense of Caratheodory to $\C\setminus\realm{\Delta}$ as $r\tend 1$.} Thus as $r\tend 1$, $\phi_r\tend \phi$ uniformly on compact subsets of $\C\setminus\realm{\Delta}$ and $\phi_r^{-1}\tend \phi^{-1}$ uniformly on compact subsets of $\C\setminus\ov{\D}$.
We claim that apart from $\Delta$ itself, the components of $P^{-p}(\Delta)$ are at positive distance from $\realm{\Delta}$. By local backward invariance, $P^{-p}(\realm{\Delta})\setminus \realm{\Delta}$ is at positive distance from $\realm{\Delta}$. Since $P$ is injective on $\realm{\Delta}^k$ by Corollary~\ref{cor:zero}, it follows that $P^{-p}(\Delta)\cap \realm{\Delta} = \Delta$, proving the claim.
Now, since $\phi_r^{-1}\tend \phi^{-1}$ uniformly on compact sets, the image by $\phi_r^{-1}$ of the annulus $1<|z|<1+\epsilon/2$ is disjoint from the components of $P^{-p}(\Delta)$ other than $\Delta$ for all $r$ sufficiently close to $1$.
Thus the domains of definition of $f_r$ contain the annulus $1<|z|<1+\epsilon/2$ for all such $r$.
Thus the domains of definition of $\wt{f}_r$ contain a common annulus containing $\partial\D$ for $r$ close enough to $1$.
On the boundary of this annulus, $\wt{f}_r$ converges uniformly to $\wt{f}$.
This implies by the maximum principle that $\wt{f}_r$ converges uniformly to $\wt{f}$ on the annulus, so in particular on $\partial \D$.
As a uniform limit of orientation-preserving homeomorphisms of $\wt{f}_r:\partial\D\to\partial\D$ with rotation number $\theta$, $\wt{f}$ has degree one and rotation number $\theta$.
\end{proof}

{\bf End of the Proof.} Let us assume by contradiction that $\theta$ belongs to $\cal H$. Then $\wt{f}$ is analytically linearizable on $\partial \D$. The linearizing map has an injective holomorphic extension to a neighborhood of $\partial \D$, and by analytic continuation this extension still conjugates $\wt{f}$ to a rotation in a neighborhood of $\partial\D$. Since $\wt{f}(z)=f(z)$ for $|z|>1$, $f$ has a stable domain containing an annulus of the form $1<|z|<1+\epsilon'$. Thus $P^p$ has a stable domain containing a neighborhood of $\realm{\Delta}$, hence a neighborhood of $\partial \Delta$. This contradicts the fact that $\partial{\Delta}$ is contained in the Julia set.

\section{Proof of Lemma~\ref{lem:oops}}
%The following lemma is a well-known application of homology theory:
%
%\begin{lemma}\label{lem:fff}
%If $K$ and $K'$ are non-empty full compact subsets of $\C$ and $K\cap K'$ is a singleton, then $K\cup K'$ is full.
%\end{lemma}
%\begin{proof} Let $\wh\C$ denote the Riemann sphere. Let $U=\wh\C\setminus K$ and $V=\wh\C\setminus K$.
%Consider the end of the corresponding Mayer--Vietoris homology sequence with coefficient ring $\Z$: 
%\[H_1(U\cup V) \to H_0(U\cap V) \to H_0(U)\oplus H_0(V) \to H_0(U\cup V) \to 0
%.\]
%It is exact and if we call $n$ the number of connected components of $U\cap V$ the exact sequence above reads
%\[0 \to \Z^n \to \Z \oplus \Z \to \Z \to 0
%.\]
%In particular, $0-n+2-1=0$, i.e.\ $n=1$. In other words, $U\cap V$ is connected, i.e.\ $K\cup K'$ is full.
%\end{proof}

%The following lemma is a well-known application of homology theory:
%
%\begin{lemma}\label{lem:fff}
%If $K$ and $K'$ are non-empty full compact subsets of $\C$ and $K\cap K'$ is a singleton, then $K\cup K'$ is full.
%\end{lemma}
%\begin{proof} Let $\wh\C$ denote the Riemann sphere. Let $U=\wh\C\setminus K$ and $V=\wh\C\setminus K$.
%Consider the end of the corresponding Mayer--Vietoris homology sequence with coefficient ring $\Z$: 
%\[H_1(U\cup V) \to H_0(U\cap V) \to H_0(U)\oplus H_0(V) \to H_0(U\cup V) \to 0
%.\]
%It is exact and if we call $n$ the number of connected components of $U\cap V$ the exact sequence above reads
%\[0 \to \Z^n \to \Z \oplus \Z \to \Z \to 0
%.\]
%In particular, $0-n+2-1=0$, i.e.\ $n=1$. In other words, $U\cap V$ is connected, i.e.\ $K\cup K'$ is full.
%\end{proof}

In Lemma~\ref{lem:oops}, we assume that only one critical value $v$ of $P$ belongs to $\bigcup_{k\geq 0} \realm{\Delta}^k$. Suppose there are critical points $c \in \apo{\Delta}^k$, $c' \in \apo{\Delta}^l$, with $l$ different or equal to $k$, and let us prove that $c=c'$. 
Note that $P(c)\in\realm{\Delta}^{k+1}$ and $P(c')\in\realm{\Delta}^{l+1}$ so necessarily $P(c')=P(c)=v$.
The sets $\realm{\Delta}^{k+1}$ and $\realm{\Delta}^{l+1}$ have a common point $v$, and since $\realm{\Delta}^{k} \supset P^{p-1}(\realm{\Delta}^{k+1})$ and $\realm{\Delta}^{l} \supset P^{p-1}(\realm{\Delta}^{l+1})$, the sets $\realm{\Delta}^{k}$ and $\realm{\Delta}^{l}$ also have a common point.
Consider the compact connected set
\[ K=\realm{\Delta}^{k+1}\cup\realm{\Delta}^{l+1}
\]
 and let $L$ be the component of $P^{-1}(K)$ that contains $\realm{\Delta}^{k}$ and $L'$ be the one that contains $\realm{\Delta}^{l}$. Then $L=L'$ because they have a point in common.

Let us prove that the set $K$ is full.
If $k=l$ this is immediate since $K=\realm{\Delta}^{k+1}$.
Otherwise by\footnote{More generally, if $K$ and $L$ are two full compact subsets of $\C$ and $K\cap L$ is full then $K\cup L$ is full. This can be proved for instance by studying the end of the Mayer--Vietoris exact homology sequence applied to the complement of $K$ and of $L$ in the Riemann sphere.} Corollary~\ref{cor:sep:2}, $\realm{\Delta}^{k+1}\cap \realm{\Delta}^{l+1}$ is a single point $z$ and there are two curves $\gamma$ and $\gamma'$ from $\infty$ to $z$ such that the curve $\gamma\cup \gamma'$ separates $\realm{\Delta}^{k+1}\setminus\{z\}$ from $\realm{\Delta}^{l+1}\setminus\{z\}$. In fact, this curve cuts the plane into two domains $A$, $B$ with $\realm{\Delta}^{k+1}\setminus\{z\} \subset A$ and $\realm{\Delta}^{l+1}\setminus\{z\}\subset B$. The curve minus the point $z$ is contained in the unbounded component of $\C\setminus K$. In particular, if $C$ is a bounded connected component of $\C\setminus K$, then $C$ is disjoint from $\gamma\cup\gamma'$ thus contained either in $A$ or in $B$. Hence $\partial C$ is contained either in $\realm{\Delta}^{k+1}$ or $\realm{\Delta}^{l+1}$. But then $C$ is a bounded component of $\C\setminus\realm{\Delta}^{k+1}$ or $\C\setminus\realm{\Delta}^{l+1}$, which is impossible.
%The open sets $A\setminus\realm{\Delta}^{k+1}$ and $B\setminus\realm{\Delta}^{l+1}$ are connected: consider a point $z'$ in the first set. Since $z\notin\realm{\Delta}^{k+1}$, there is a path $\delta$ from $z'$ to $\gamma$ in $\C\setminus\realm{\Delta}^{k+1}$. Shorten $\delta$ by stopping at the first time it hits $\gamma\cup \gamma'$. Then $\delta$ minus its endpoint is contained in $A\setminus\realm{\Delta}^{k+1}$. Now $\gamma\setminus\{z\}$ has a connected one-sided tubular neighborhood contained in $A\setminus\realm{\Delta}^{k+1}$. Similar statements hold for $B\setminus\realm{\Delta}^{l+1}$.
Hence $\realm{\Delta}^{k+1}\cup\realm{\Delta}^{l+1}$ is full.

Since there is only one critical value, we can apply case~\eqref{item:l:2} of Lemma~\ref{lem:l} to $K$ and $L$ and deduce $c=c'$. \qed

\section{Proof of Theorem~\ref{thm:preciii}}\label{s:mainproof}

Recall that $\realm{\Delta}^k$ is defined as $\fil{\ov{\Delta^k}}$, where $\Delta^k=P^k(\Delta)$ and that $\apo{\Delta}^k$ is the connected component of $P^{-1}(\realm{\Delta}^{k+1})$ that contains $\realm{\Delta}^{k}$.
The assumption $n_0=1$ of Theorem~\ref{thm:preciii} means is that there is only one critical point $c'$ in $\bigcup_{k\ge 0}\apo{\Delta}^k$.
% there is only one critical point $c'$ in the union of the $\apo{\Delta}^k$.
Without loss of generality, we may assume that $c'\in\apo{\Delta}^0$. 
%In Section~\ref{subsec:aprop}, Definition~\ref{def:uv} we introduced simply connected open sets $V_k$ and $U_k$ containing $\realm{\Delta}^k$, with $U_{k+p}=U_k$, $V_{k+p}=V_k$, such that $P:U_k \to V_{k+1}$ is a ramified covering with all critical values in $\realm{\Delta}^{k+1}$. 
In Section~\ref{subsec:aprop}, Definition~\ref{def:uv} we introduced simply connected open sets $U_0$ and $V_1$ containing $\realm{\Delta}^0$ and $\realm{\Delta}^1$ respectively, such that $P:U_0 \to V_{1}$ is a ramified covering with all critical values in $\realm{\Delta}^{1}$. 
We distinguish two cases:
\begin{itemize}
\item
First case: $P(c')\in\partial\Delta^1$. 
The critical value $P(c')\in V_1$ has only one preimage in $U_0$ by Lemma~\ref{lem:l} case~\eqref{item:l:2}. Recall that $\partial \Delta$ surjects to $\partial \Delta^1$ under $P$ (see the beginning of Section~\ref{subsec:dyn}).
Hence there must be at least one preimage of $P(c')$ on $\partial \Delta$. This implies that $c'\in\partial\Delta$.
\item 
Second case: $P(c')\notin\partial\Delta^1$. We will show this cannot occur according to the following plan:
 \begin{enumerate}
 \item We will first prove that $\realm{\Delta}$ is locally totally invariant under $P^p$ (Sections~\ref{subsec:tli}, \ref{subsec:sym}, \ref{sub:mutual} and~\ref{sub:mutual2} or its variant~\ref{sub:alt}). 
 \item We will then consider the external map associated to $(P^p,\Delta)$, i.e.\ we conjugate $P^p$ by the conformal map $\C\setminus\realm{\Delta} \to \C\setminus\ov{\D}$. We prove in Section~\ref{subsec:conj} that the external map is a degree $d$ covering of the circle for some $g\geq 2$. (See Section~\ref{subsec:extmap} for a general discussion of external maps. Note that the proof of Theorem~\ref{thm:preci} also makes use of the external map associated to $(P^p,\Delta)$ but there, we get a degree $1$ cover instead.)
 \item The local total invariance implies that we get a map defined in an annulus $1<|z|<1+\epsilon$, but this time the induced circle map has degree $>1$. We will then use a theorem of Mañé ensuring hyperbolicity of this circle map, provided there are no non-repelling cycles on the circle (Section~\ref{subsec:Mane})
 \item To check the absence of such cycles (Section~\ref{subsubsec:nrc}), we will use
the second hypothesis of Theorem~\ref{thm:preciii}, i.e.\ that $P$ has only $2$ critical values. 
\item Once we have this hyperbolicity, it follows that the map has a polynomial-like restriction to a neighborhood of $\realm{\Delta}$, with only one critical point (Lemma~\ref{lem:hyp-ana-top}). 
\item We can then apply Herman's theorem (or Lemma~\ref{lem:precii}) to reach a contradiction.
\end{enumerate}
\end{itemize}
The rest of Section~\ref{s:mainproof} deals with the second case: we have assumed $P(c') \notin \partial \Delta^1$ to arrive to a contradiction.
In particular, not all that we prove in the rest of this section needs to occur in reality. 

\subsection{Towards local invariance}\label{subsec:tli}
As $P^p(\realm{\Delta})\subset\realm{\Delta}$, we have $\realm{\Delta}\subset P^{-p}(\realm{\Delta})$.
 By Corollary~\ref{lem:locbwinv} and the discussion that follows it, to prove that \emph{$\realm{\Delta}$ is locally totally invariant by $P^p$} it is sufficient to prove that $\apo{\Delta}^k=\realm{\Delta}^k$ for every $k$.

\begin{lemma} $P(c')$ belongs to a component shielded by $\partial\Delta^1$ which is not $\Delta^1$.
\end{lemma}\begin{proof} The assumption that the critical value $P(c')$ does not belong to $\partial\Delta^1$ implies that it belongs to a component shielded by $\partial\Delta^1$. The restriction $P:\Delta\to\Delta^1$ is a holomorphic bijection, in particular it is surjective and has no critical point. Thus every point of $\Delta^1$ has at least one preimage in $U_0$ that is not a critical point. Now $P(c')$ has a unique preimage in $U_0$ (namely $c'$) and it is a critical point. Hence $P(c')\notin \Delta^1$.
Thus $P(c')$ belongs to component shielded by $\partial \Delta^1$ but different from $\Delta^1$.\end{proof}

\begin{lemma}\label{lem:tec}
If $k\neq 0\pmod p$ then $\apo{\Delta}^k = \realm{\Delta}^k.$
\end{lemma}
\begin{proof} We saw that the critical value $P(c')$ belongs to a component shielded by $\partial\Delta^1$.
Since on one hand $P(c')$ belongs to the Fatou set and on the other hand $\realm{\Delta}^i \cap \realm{\Delta}^j$ is either empty or a single point in the Julia set (Corollary~\ref{cor:sep}), it follows that $P(c')\notin\realm{\Delta}^k$ when $k\neq1\pmod p$, hence $\apo{\Delta}^{k-1}$ contains no critical point (recall the assumption that $c'$ is the only critical point in $\displaystyle \bigcup_{k\ge 0}\apo{\Delta}^k$). Thus by Corollary~\ref{cor:zero}:
$\apo{\Delta}^k = \realm{\Delta}^k \text{ if }k\neq 0\pmod p.$
\end{proof}
The next few sections aim to prove that this also holds when $k=0\pmod p$, i.e.\ that
\begin{equation}\label{eq:inv}
\apo{\Delta}=\realm{\Delta}.
\end{equation}

\begin{lemma}\label{lem:onepreim}There exists $m\geq 1$ such that 
%$P^{mp}(c')$ belongs to $P^{-1}(\Delta^1)$ and to $\realm{\Delta}$ but not to $\setminus \Delta$.
$P^{mp}(c')\in P^{-1}(\Delta^1)\setminus \Delta$.
%$P^{mp}(c')\in (P^{-1}(\Delta^1) \cap \realm{\Delta})\setminus \Delta)$
%$P^{mp}(c')$ belongs to a preimage of $\Delta^1$ in $\apo\Delta$ which is in $\realm\Delta\setminus \Delta$. 
%a component of $ \apo\Delta\setminus \Delta$ which is in $\realm\Delta$. 
%The smallest $k_0$ such that $\displaystyle P^{k_0}(c')\in \cup_k\Delta^k$ is of the form $k_0=1+mp$ with $m\geq 1$ moreover $P^{k_0}(c')\in \Delta^1$. 
 \end{lemma}
 \begin{proof} The point $P(c')$ belongs to $\realm\Delta^1$. Moreover, since it is in a shielded component it is eventually mapped under iterations of $P$ to $\bigcup_k{\Delta}^k$ (Lemma~\ref{lem:rog}). 
%Thus, some iterate of $c'$ belongs to this set.
Neither $c'$ nor $P(c')$ belongs to $\bigcup_k\Delta^k$. By Lemma~\ref{lem:tec} and Corollary~\ref{cor:zero}, $P$ is a bijection from $\realm{\Delta}^k$ to $\realm{\Delta}^{k+1}$ when $k\neq 0\pmod p$ and $P$ is also a bijection from $\Delta^k$ to $\Delta^{k+1}$. It follows that a point in $\realm{\Delta}^k$ that maps to $\Delta^{k+1}$ necessarily belongs to $\Delta^k$ if $k\neq 0\pmod p$.
Hence the first iterate of $c'$ that hits $\bigcup_k\Delta^k$ is a point $P^{k_0}(c')\in \Delta^1$ with $k_0=1+mp$ and $m\geq 1$. Thus $P^{mp}(c')\in \apo\Delta$ belongs to a preimage of $\Delta^1$ which is not $\Delta. $
%Recall that $P(c')$ is contained in a component shielded by $\partial\Delta^1$. Since $mp\geq 1$, it follows that $P^{mp}(c')$ belongs to a component shielded by $\partial\Delta$. This proves the Lemma. 
\end{proof}

\subsection{Symmetries}\label{subsec:sym} To prove~(\ref{eq:inv}), we will use in an essential way the fact that there is some symmetry on the preimages of $\Delta^1$ in $\apo\Delta$ and in the global picture of $U_0$. % After Equation~\eqref{eq:O} below, it will then be enough to prove that all the other preimages of $\Delta^1$ in $\apo \Delta$ also belong to $\realm \Delta$.

The map $P:U_0\to V_1$ is a ramified covering with a single critical value $P(c')$ and $V_1$ is simply connected so it is equivalent to the map $D:\D\to\D$, $z\mapsto z^d$ where $d$ is the local degree of $P$ at $c'$.
The map $D:z\mapsto z^d$ is a normal covering\footnote{Recall that the \emph{group of automorphisms} or \emph{deck transformation group} of a covering $f:X\to Y$ is the set of homeomorphisms $\phi:X\to X$ so that $f\circ \phi=\phi$, and that the covering is termed \emph{regular}, \emph{normal} or \emph{Galois} whenever its action is transitive on fibers of $f$.} and has a finite cyclic group of automorphisms, namely the group of rotations generated by $z\mapsto e^{2\pi i/d}z$. Hence the ramified covering $P:U_0\to V_1$ is normal too and its group of autormophisms $G$ is isomorphic to $(\Z/d\Z,+)$. 

Let $\rho:U_0\to U_0$ be a generator of $G$. 
Since $P(c')$ does not belong to $\Delta^1$, this implies that $\Delta^1$ has exactly $d$ preimage components in $\apo{\Delta}$, the sets $\rho^n(\Delta)$, $n\in\Z$. Let us call them \emph{partners} of $\Delta$ and denote their union by $\cal O$: $$\cal O = \bigcup_{g\in G} g(\Delta).
$$
\begin{lemma}\begin{equation}\label{eq:O}
 \apo{\Delta} = \realm{\cal O}.
\end{equation}

\end{lemma}
\begin{proof}
Note that \[ \cal O = \apo{\Delta} \cap P^{-1}(\Delta^1) = U_0 \cap P^{-1}(\Delta^1) = \bigcup_{n\in\Z} \rho^n(\Delta) = \bigcup_{g\in G} g(\Delta).
\]
By Corollary~\ref{cor:preim}, $P^{-1}(\realm{\Delta^1})=\realm{P^{-1}(\Delta^1)}$.
The set $\apo{\Delta}$ is thus a connected component of $\realm{P^{-1}(\Delta^1)}$, so by Proposition~\ref{prop:chapo},
$\apo{\Delta} = \fil{\ov{P^{-1}(\Delta^1)} \cap \apo{\Delta}}$. Now $P^{-1}(\Delta^1)$ consists of finitely many components: the ones in $\apo{\Delta}$, whose union compose $\cal O$ and the rest, which are disjoint from $U_0$, so their closure is disjoint from $\apo{\Delta}$. This shows $\ov{P^{-1}(\Delta^1)} \cap \apo{\Delta} = \ov{P^{-1}(\Delta^1) \cap \apo{\Delta}}$, so $$\fil{\ov{P^{-1}(\Delta^1)} \cap \apo{\Delta}} = \fil{\ov{P^{-1}(\Delta^1) \cap \apo{\Delta}}} = \fil{\ov{\cal O}} = \realm{\cal O}.$$ Hence $
 \apo{\Delta} = \realm{\cal O}.$
\end{proof}

%We will later use the following condensed notation:
%\[G \cdot \Delta=\bigcup_{g\in G} g(\Delta).\]

\subsection{Some topological considerations }\label{sub:mutual}

In this section, we reduce the proof of local total invariance to a general topological proposition. In the course of this argument, it will be shown that all partners of $\Delta$ share the same boundary. Together with the unbounded complementary component of $\partial \Delta$, they thus form a system of lakes of Wada.\footnote{Recall that we are in the middle of a proof by contradiction, so our work does not have any bearing on the problem of existence of lakes of Wada. Yet, Rogers proved this existence under the sole assumption that the boundary of the polynomial Siegel disk $\Delta$ has more than two complementary components (see Section~\ref{sec:state}) a hypothesis that has not been proven possible or impossible.}

\begin{lemma}\label{lem:u}
 $\partial \Delta$ shields one of the partners of $\Delta$ other than $\Delta$. In other words,\[ u(\Delta)\subset\realm{\Delta} \text{ for some } u\in G\setminus\{\on{id}\}.
\]
\end{lemma}
\begin{proof} By Lemma~\ref{lem:onepreim}, $P^{mp}(c')$ belongs to a component of $P^{-1}(\Delta^1)$ other than $\Delta$. Hence, $P^{mp}(c')\in u(\Delta)$ for some $u\in G$ with $u\neq \on{id}$.
Recall that $P(c')$ is contained in a component shielded by $\partial\Delta^1$. Since $mp\geq 1$, it follows that $P^{mp}(c')$ belongs to a component shielded by $\partial\Delta$. This proves $u(\Delta)\subset\realm{\Delta}$.
\end{proof}

Our next goal is to show that $\partial\Delta$ shields \emph{all} partners of $\Delta$. This can be rephrased as $\cal O \subset \realm{\Delta}$ where $\cal O=\bigcup_{g\in G} g(\Delta)$ was introduced above.
Once this is established, it follows from Corollary~\ref{cor:rincl} that $\realm{\cal O} \subset \realm{\Delta}$.
Thus, by Equation~\eqref{eq:O} above, 
\[ \realm{\Delta} \subset \apo{\Delta} = \realm{\mathcal O} \subset \realm{\Delta}
,\]
which proves the desired local total invariance (cf.\ Section~\ref{subsec:tli}).

The fact that $\partial\Delta$ shields all partners of $\Delta$ follows immediately from the following more general\footnote{We have not tried to find minimal hypotheses in Proposition~\ref{prop:wada}.} topological proposition, by going to the disk model of $U_0$ in which elements of $G$ act as rotations:

\begin{proposition}\label{prop:wada}
Let $U_0$ be a connected and simply connected open subset of $\C$, $G$ be a non-trivial finite cyclic group of homeomorphisms of $U_0$ that is conjugate to a group of rotations on the unit disk. Let $c'$ be the common fixed point
and let $\Delta$ be a non-empty connected and simply connected open subset of $U_0$ such that $g(\Delta) \cap \Delta = \emptyset$ for all $g\in G\setminus\{\on{id}\}$, such that $\partial \Delta = \partial \realm{\Delta}$ and such that $c'$ does not belong to $\partial\Delta$.
If there is a $u\in G\setminus\{\on{id}\}$ such that $u(\Delta)\subset\realm{\Delta}$ then for every $g\in G$, $g(\realm{\Delta})=\realm{\Delta}$.
\end{proposition}

For our case where $\Delta$ is the Siegel disk, the hypothesis $\partial \Delta = \partial \realm{\Delta}$ is satisfied according to Lemma~\ref{lem:oubli}. Moreover, By Lemma~\ref{lem:u} we get the assumption that there is a $ u\in G\setminus\{\on{id}\}$ such that $u(\Delta)\subset\realm{\Delta}$.
Note that the conclusion $g(\realm{\Delta})=\realm{\Delta}$ implies that $g({\Delta})\subset\realm{\Delta}$ for every $g \in G$ and therefore $\mathcal O\subset \realm{\Delta}$.

Section~\ref{sub:mutual2} is devoted to a self-contained proof of Proposition~\ref{prop:wada}. Section~\ref{sub:alt} gives a simpler proof of ${\mathcal O} \subset \widehat{\Delta}$ suggested by the referee, based on a theorem of Rogers.

\subsection{Proof of Proposition~\ref{prop:wada}}\label{sub:mutual2}
 
As this proof is a bit long, let us describe the plan: 
The proof goes by contradiction. Assume that there is some $g\in G$ such that $g(\realm\Delta)\neq \realm \Delta$. Then 
the stabilizer $H$ of $\realm{\Delta}$ in $G$ is a proper subgroup of $G$. We first prove that $u\in H$ so that $H$ is not trivial. 
Let $\rho$ be a generator of $G$, $\rho\notin H$. Then $\partial \Delta\neq \rho(\partial \Delta)$ but we prove that they both separate $c'$ from $\infty$. Intuitively, taking the images by iterates of $\rho$ should lead to a contradiction, but we have not been able to push this argument through using $\partial \Delta$ itself. Instead, we use two curves $\gamma$ and $\gamma'$ that are disjoint and such that each separate one another from $\infty$, which is an impossible configuration. We now begin the proof.
 
Recall that $G$ acts on $U_0$. The stabilizer (in $G$) of a set $A \subset U_0$ is defined by
\[\on{Stab} A =\setof{g\in G}{g(A)=A}
.\]
Let
\[ H= \on{Stab} \realm{\Delta}
.\]
Let $G\{\realm{\Delta}\}$ denote the orbit of $\realm{\Delta}$ under $G$:
\[G\{\realm{\Delta}\} = \{\realm{\Delta}, \rho(\realm{\Delta}), \rho^2(\realm{\Delta}), \ldots\}.\]
Do not confuse this with the the following notation that we also use below: for $X\subset U_0$, let
\[H\cdot X = \bigcup_{h\in H} h(X).\]

\begin{lemma}\label{lem:A}
 For every $M,N\in G\{\realm{\Delta}\}$, the inclusion $N \subset M$ implies $N=M$.
\end{lemma}
\begin{proof}
There is $g\in G$ such that $N=g(M)$, so $g(M)\subset M$. Let $d$ be the order of $G$.
Then $M = g^{d}(M) \subset g^{d-1}(M) \subset \cdots \subset g^{2}(M) \subset g(M) \subset M$. Thus $g(M)=M$.
\end{proof}

\begin{lemma}\label{lem:rgxgrx}
If $X$ is a compact subset of $U_0$ then every $g\in G$ is defined on $\realm{X}$ and $\realm{g(X)} = g(\realm{X})$.
\end{lemma}
\begin{proof}
 Recall that $\realm X$ is the union of $X$ and all bounded connected components of $\C\setminus X$.
 Also, $U_0$ is simply connected, so its complement in the Riemann sphere is connected. As a consequence, for any compact subset $X$ of $U_0$, the unbounded component of $\C\setminus X$ must contain $\C\setminus U_0$, hence $\realm{X}$ is contained in $U_0$. The latter is the domain of $g$, hence $g$ is defined on $\realm{X}$.
Consider any bounded component $C$ of $\C\setminus X$. Then $\partial C\subset X$, so $\partial g(C) = g(\partial C) \subset g(X)$. Moreover, since $g(C)$ is open, it must be a bounded component of the complement of $g(X)$, so $g(C)\subset \realm {g(X)}$.
Hence $g(\realm{X})\subset\realm {g(X)}$.
Replacing $g$ by $g^{-1}$ and $X$ by $g(X)$ in the above argument leads to the opposite inclusion.
\end{proof}
% Recall that $\realm X$ is the union of $\partial X$ and of all bounded connected components of $\C\setminus X$.
% As $g$ is a  continuous and open map, $g(\partial X)=\partial g(X)$.
% Recall that $U_0$ is simply connected, so its complement in the Riemann sphere is connected. As a consequence, for any compact subset $X$ of $U_0$, the unbounded component of $\C\setminus X$ must contain $\C\setminus U_0$: hence $\realm{X}\subset U_0$. The latter is the domain of $g$, hence $g$ is well defined on $\realm{X}$.
%Consider any bounded component $C$ of $\C\setminus X$. Then $\partial C\subset\partial X$, so $\partial g(C) = g(\partial C) \subset g(\partial X) = \partial g(X)$. Since moreover $g(C)$ is open, we get that $g(C)$ is a bounded component of the complement of $g(X)$, so $g(C)\subset \realm {g(X)}$. Together with $g(\partial X)=\partial g(X)$, this implies that $g(\realm{X}) \subset \realm {g(X)}$. Applying all this to $g^{-1}$ leads to the opposite inclusion.

\begin{lemma}The stabilizer $H$ is non-trivial.
% More precisely, it contains $u$ (defined in Proposition~\ref{prop:wada}).
\end{lemma}\begin{proof}
Recall that $u(\Delta)\subset \realm{\Delta}$ for some $u\in G\setminus\{\on{id}\}$ by the assumption of in Proposition~\ref{prop:wada}. It follows from Corollary~\ref{cor:rincl} that $\realm{u(\Delta)}\subset\realm{\Delta}$.
By Lemma~\ref{lem:rgxgrx}, $u(\realm{\Delta})\subset \realm{\Delta}.$
Applying Lemma~\ref{lem:A}, we obtain
$u(\realm{\Delta})=\realm{\Delta}.$\end{proof}

 \begin{lemma}\label{lem:AH}
$H = \on{Stab} \realm{\Delta} = \on{Stab} \partial\Delta = \on{Stab} g(\realm{\Delta}) = \on{Stab} g(\partial \Delta)$ for every $g\in G$.
\end{lemma}
\begin{proof}
%For any $g\in G$, the stabilizer of $g(\realm{\Delta})$ is the conjugate group $gHg^{-1}$. Since $G$ is commutative, $gHg^{-1}=H$. By hypothesis, $\partial\realm{\Delta} = \partial \Delta$. Thus
%for all $h\in H$, $h(\partial\Delta)=h(\partial\realm{\Delta})=\partial h(\realm{\Delta}) = \partial\realm{\Delta} = \partial \Delta$. Conversely, for $g\in G$, if $g(\partial \Delta) = \partial \Delta$ then $\partial g(\realm\Delta) = g(\partial\realm \Delta) = g(\partial \Delta)=\partial \Delta = \partial\realm\Delta$, i.e.\ $\realm{\Delta}$ and $\realm g(\Delta)$ have the same boundary, thus are equal.
By the assumption $\partial \Delta = \partial \realm{\Delta}$. Thus, if $h \in H$, then $h(\partial \Delta)=h(\partial \realm{\Delta})=\partial h(\realm{\Delta})=\partial \realm{\Delta} = \partial \Delta$, which proves $h \in \on{Stab} \partial \Delta$. Conversely, if $h \in \on{Stab} \partial \Delta$, then $\partial h(\realm{\Delta})=h(\partial \realm{\Delta})=h(\partial \Delta)=\partial \Delta=\partial \realm{\Delta}$. By Lemma~\ref{lem:rgxgrx},
$h(\realm{\Delta})=\realm{h(\Delta)}$.
Thus $\realm{\Delta}$ and $\realm{h(\Delta)}$ have the same boundary. Since they are full, they must be equal. Hence $h(\realm{\Delta})=\realm{h(\Delta)}=\realm{\Delta}$ which proves $h \in H$. 

To finish the proof, note that for every $g \in G$, both stabilizers of $g(\realm{\Delta})$ and $g(\partial \realm{\Delta})$ coincide with the conjugate group $gHg^{-1}$, which is just $H$ since $G$ is commutative.\end{proof}

\begin{lemma}\label{lem:sepa2}
For every $g\in G$, $\partial g(\Delta)$ separates $c'$ from $\infty$.
\end{lemma}
\begin{proof}
 It is enough to prove that $\partial \Delta$ separates $c'$ from $\partial U_0$. Assume by way of contradiction that it does not. Then there is a path $\delta$ in $U_0$ from $c'$ to $\partial U_0$ and avoiding $\partial \Delta$.
% Let us assume that near $c'$, $\delta$ is a $C^1$ curve in the coordinates where the group is a group of rotations.
% But $\partial \Delta$ is invariant under the non-trivial group $H$, hence $\partial \Delta \cap (H\cdot \delta) =\emptyset$.
  Let $V$ be the connected component of $U_0\setminus (H\cdot \delta)$  containing $\partial \Delta$. For every $h\in H$, since $h(\partial\Delta)=\partial\Delta$, the sets $V$ and $h(V)$ are not disjoint (thus are equal).
  %Let us now prove that for every component $V$ of $U_0\setminus (H\cdot \delta)$ and for every $h\in H$ with $h\neq\on{id}$, $h(V)\cap V=\emptyset$, thus leading to a contradiction with the previous paragraph since $H$ is not trivial.
 We now work in coordinates in which the group $G$ is a group of rotations on a disk centered on $0$, where for simplicity we will use the same name for all the objects.
 %Let us work by contradiction and assume that $h(V)\cap V\neq \emptyset$. 
  Choose $h\in H\setminus\{\on{id}\}$ and $z\in V\cap h(V)$: since both $z$ and $h^{-1}(z)$ belong to $V$ and since $V$ is open and  connected,  there exists a path $\gamma$ in $V$ from $h^{-1}(z)$ to $z$. The winding number\footnote{By the winding number of an open path here we mean the difference of the initial and final values of a lift of the argument along the path, divided by $2\pi$.} around $0$ of this path is of the form $\alpha=i/d$ for some $i\in \Z$ and $i\notin d\Z$ because $h\neq \on{id}$ hence $\alpha\neq 0$. Let $k$ be the order of $h$. The concatenation of $\gamma$, $h(\gamma)$, $h^2(\gamma)$, \ldots, $h^{k-1}(\gamma)$ has winding number $k\alpha$ and is closed so $k\alpha\in\Z\setminus\{0\}$. Hence $\gamma'$ separates $0$ from $\infty$. Note that $H\cdot \delta$ is connected and disjoint from $\gamma'$. Hence $\gamma'$ separates $H\cdot\delta$ from the boundary of the round disk, which contradicts the fact that $\delta$ starts from $0$ and tends to this boundary.
 
 %for otherwise, we would have $h(V)=V$ (the set of components is invariant by $H$) and by Brouwer's fixed point theorem, there would be a fixed point of $h$ on $\ov V$. 
  %The only fixed point is $c'$, which is in $\delta$. The local configuration of $H\cdot \delta$ near $0$ is simple in the coordinates where $G$ is a group of rotations, and it is [OOPS].
%  Let now $V$ be the component containing $\partial \Delta$. It follows that $h(\partial\Delta)\cap \partial \Delta =\emptyset$, in plain contradiction with the $H$-invariance of $\partial \Delta$.
\end{proof}

%We will prove that $H=G$, so the boundary of each partner will shield all other partners and all partners will have the same boundaries, like lakes of Wada.

The generator $\rho$ of $G$ is not in $ H$, i.e.\ $\realm{\Delta} \neq \rho(\realm{\Delta})$, so by Lemma~\ref{lem:A}
\begin{equation}\label{eq:cen}
 \realm{\Delta}\not\subset \rho(\realm{\Delta})\text{ and }\rho(\realm{\Delta})\not\subset \realm{\Delta}.
\end{equation}
The first non-inclusion implies $\partial\Delta\not\subset\rho(\realm{\Delta})$, 
otherwise $\realm{\Delta} =\realm{\partial\Delta} \subset \realm{\rho(\realm \Delta)} = \rho\Big(\realm{\realm{\Delta}}\Big) = \rho(\realm{\Delta})$.
The second non-inclusion gives a similar conclusion, hence
\begin{equation}\label{eq:cen2}
 \partial\Delta\not\subset \rho(\realm{\Delta})\text{ and }\rho(\partial\Delta)\not\subset \realm{\Delta}.
\end{equation}

\begin{lemma}
\begin{equation}\label{eq:cen3}
\Delta\cap \realm{\rho(\Delta)} = \emptyset\text{ and }\rho(\Delta)\cap\realm{\Delta} = \emptyset.
\end{equation}
\end{lemma}
\begin{proof}
Since $\Delta$ and $\rho(\Delta)$ are disjoint open subsets of $U_0$, $\Delta$ does not intersect $\partial\rho(\Delta)$. If the first intersection were not empty, since $\Delta$ is connected, we would get $\Delta\subset\realm{\rho(\Delta)}$ and thus $\realm{\Delta} \subset \realm{\rho(\Delta)}$ by Corollary~\ref{cor:rincl}, leading to a contradiction. The second intersection is empty by a similar argument.
\end{proof}
 
 \smallskip

\noindent\textit{Construction of a pair of disjoint curves $\gamma$ and $\gamma'$.}

\smallskip

\begin{figure}
\begin{tikzpicture}
\node at (0,0) {\includegraphics[width=10cm]{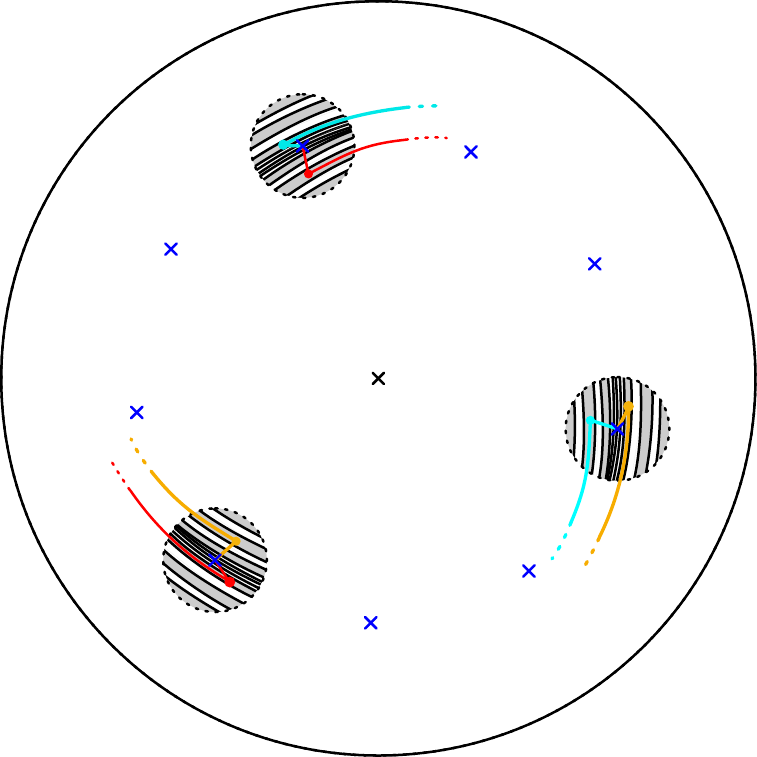}};
\node at (0.32,0.22) {$c'$};
\node at (4,4) {$U_0$};
\node at (-2.1,1.9) {$\rho(z_0)$};
\node at (-2.05,3.1) {$D$};
\node at (-2.2,-3.5) {$b(D)$};
\node at (3.2,0.45) {$b^2(D)$};
\draw[-] (-1,4) node[anchor=south] {$z_0$} -- (-1,3);
\draw[-] (-0.9,2) node[anchor=north] {$z_1$} -- (-0.9,2.7);
\draw[-] (-1.2,-2.7) node[anchor=west] {$z_2$} -- (-1.95,-2.7);
\node at (-3.7,-1.5) {$\gamma_0$};
\node at (-2.5,-1) {$b(\gamma_0)$};
\node at (0.5,2.8) {$\gamma_0$};
\node at (0.5,4) {$b^2(\gamma_0)$};
\node at (3.5,-2) {$b(\gamma_0)$};
\node at (2,-1.6) {$b^2(\gamma_0)$};
\end{tikzpicture}
\caption{Schematic illustration of the construction of the curve $\gamma$ in the case $|G|=9$ and $|H|=3$, in the disk model. Here $\gamma=\gamma_0\cdot b(\gamma_0)\cdot b^2(\gamma_0)$. The part of $\Delta$ that sits in $D=B(z_0,\epsilon)$ and its images under $H$ are indicated in gray, its boundary in black. The curve $\gamma_0$ is indicated in red, but only its beginning and its end are shown. Similarly $b(\gamma_0)$ is indicated in yellow and $b^2(\gamma_0)$ is indicated in cyan. The point $z_0$ and its orbit under $G$ are indicated by blue crosses.}
\label{fig:schema}
\end{figure}

The first non-inclusion in \eqref{eq:cen2} means that there is a point $z_0\in\partial{\Delta}\setminus\rho(\realm{\Delta})$. Since $\rho(\realm{\Delta})$ is closed, there is a neighborhood of $z_0$ that does not intersect $\rho(\realm{\Delta})$. Let $D=B(z_0,\epsilon)$ be contained in this neighborhood and in $U_0$ and not containing $c'$ ($c'\neq z_0$ because $c'\notin\partial \Delta$ and $z_0\in \partial \Delta$):
\[D \subset U_0,\ D\cap \rho(\realm{\Delta})=\emptyset,\ c'\notin D.\]
 Since $z_0\in\partial\Delta$, $D$ intersects $\Delta$.
Denote by $b$ a generator of the group $H$. Since $b\in H$, $b(\partial \Delta)=\partial \Delta$ so $b(z_0)\in \partial \Delta$ thus $b(D)$ intersects $\Delta$ too.
Now choose any curve contained in $\Delta$ and going from a point $z_1$ in $\Delta\cap D$ to a point $z_2$ in $\Delta \cap b(D)$. Complete it by any curve contained in $D$ from $z_1$ to $z_0$ and by any curve contained in $b(D)$ from $z_2$ to $b(z_0)$: thus we get a curve $\gamma_0$ going from $z_0$ to $b(z_0)$.
Extend this curve to a closed\footnote{It is not hard to get a \emph{simple} closed curve, but we will not need to.} curve $\gamma = \gamma_0\cdot b(\gamma_0)\cdots b^k(\gamma_0)$ where $k\geq 2$ is the order of $H$. Notice that $\gamma$ does not contain the critical point $c'$.\footnote{One could naively think we could do without $D$ by choosing $z_0$ accessible from $\Delta$. But there is no guarantee that $b(z_0)$ is accessible from $\Delta$. The image under $b$ of the access to $z_0$ from $\Delta$ is an access to $b(z_0)$, but from $b(\Delta)$, not $\Delta$.}

To construct $\gamma$ we used the fact that $\realm{\Delta} \not\subset \rho(\realm\Delta)$. We similarly use $\rho(\realm{\Delta}) \not\subset \realm\Delta$ to construct a curve $\gamma'$. We can make the curves $\gamma$ and $\gamma'$ disjoint if we proceed as follows: We first select $z_0\in \partial \Delta\setminus \rho(\realm\Delta)$ and $z_0'\in\partial \rho(\Delta) \setminus \realm\Delta$. Note that $z_0$ and $z_0'$ cannot be in the same orbit under $H$ because $\partial \rho(\Delta)$ is invariant under $H$ and is contained in $\rho(\realm\Delta)$. We then choose $\epsilon$ and $\epsilon'$ small enough so that $D:=B(z_0,\epsilon)$ and $D':=B(z_0',\epsilon')$ satisfy
\bEA % I forgot what is the appropriate LaTeX environment for that
 &&D \subset U_0,\ D\cap \rho(\realm{\Delta})=\emptyset,\ c'\notin D,
 \\
 &&D' \subset U_0,\ D'\cap \realm{\Delta}=\emptyset,\ c'\notin D',
 \\
 &&D\text{ and }D'\text{ have disjoint orbits under }H.
\eEA
We then carry out the construction of $\gamma$ and $\gamma'$ as above. In particular:
\[z_0\in\gamma\cap\partial\Delta\text{ and }z'_0\in\gamma'\cap\partial\rho(\Delta).\]
Recall the following notation for $X\subset U_0$:
\[H\cdot X = \bigcup_{h\in H} h(X).\]
Recall that $H\cdot\Delta \subset \realm\Delta$ and so $H\cdot\rho(\Delta) \subset \rho(\realm\Delta)=\realm{\rho(\Delta)}$. In particular
 $H\cdot D$ is disjoint from $H\cdot\rho(\Delta)$. Now 
 $\gamma\subset (H\cdot\Delta) \cup (H\cdot D)$,
and
 $\gamma'\subset (\rho H\cdot\Delta) \cup (H\cdot D')$,
thus
\[ \gamma\cap\gamma'=\emptyset.
\]
Since on one hand $D$ is disjoint from $\realm{\rho(\Delta)} = H.\realm{\rho(\Delta)}$ and thus $H.D$ is disjoint from $\realm{\rho(\Delta)}$, and on the other hand $\Delta$ is disjoint from $\realm{\rho(\Delta)}$ by~\eqref{eq:cen3}, we also get $\gamma \cap \realm{\rho(\Delta)} = \emptyset$; similarly $\gamma' \cap \realm{\Delta} = \emptyset$:
\begin{equation}\label{eq:cen4}
\gamma \cap \realm{\rho(\Delta)} = \emptyset\text{ and }\gamma' \cap \realm{\Delta} = \emptyset.
\end{equation}

\begin{remark} It would be tempting to try to take $\gamma'=\rho(\gamma)$. But note that it is not even clear that one can take $z_0'=\rho(z_0)$: indeed, from $z_0\in\partial\Delta \setminus\rho(\realm\Delta)$ one deduces that $\rho(z_0) \in \partial \rho(\Delta)\setminus \rho^2(\realm\Delta)$, whereas we want $\rho(z_0) \in \partial \rho(\Delta)\setminus \realm\Delta$. Unless $\rho^2\in H$ (i.e.\ $H$ has index $2$ in $G$), there is no obvious reason for $\rho(z_0)$ to avoid $\realm{\Delta}$. 
\end{remark}

\begin{lemma}\label{lem:sepa}
Both $\gamma$ and $\gamma'$ separate $c'$ from $\infty$.
\end{lemma}
\begin{proof}
The proofs are similar so we only consider the case of $\gamma$. Let us go into the disk model of $U_0$: recall that by assumption there is a homeomorphism from $U_0$ to $\D$ that sends $c'$ to $0$ and conjugates the group $G$ to the group of rotations generated by $z\mapsto e^{2\pi i/d} z$.
The curve $\gamma_0$ joins $z_0$ to some $e^{2\pi i k/d} z_0 \neq z_0$. The lifted rotation angle around $0$ of $\gamma_0$ is therefore $\neq 0$. The lifted rotation angle of $\gamma$ is $d$ times the former, thus $\neq 0$. Therefore the winding number of $\gamma$ around $0$ is $\neq 0$.
So $\gamma$ separates $0$ from $\partial \D$ in the disk model. This proves the claim.
\end{proof}

%Given two disjoint compact subsets $\C$, let us say that $K$ separates $L$ from infinity whenever $L$ does not meet the unbounded connected component of $K$ (i.e.\ $L\subset \realm{K}$).

\begin{lemma}\label{lem:noshare}
 Given two disjoint connected non-empty compact subsets $K$, $L$ of $\C$, one and only one of the following occurs:
 \begin{enumerate}[label=(\alph*)]
  \item\label{item:noshare:1} $\realm{K}\cap \realm{L}=\emptyset$,
  \item\label{item:noshare:2} $\realm{K}\subset\realm{L}\setminus L$,
  \item\label{item:noshare:3} $\realm{L}\subset\realm{K}\setminus K$.
 \end{enumerate}
\end{lemma}
\begin{proof}
The set $L$ is contained in a component $U$ of the complement of $K$,
the set $K$ is contained in a component $V$ of the complement of $L$.
Cases \ref{item:noshare:1}, \ref{item:noshare:2} and \ref{item:noshare:3} correspond respectively to the situations where $U$ and $V$ are unbounded, $U$ is unbounded but not $V$ and $V$ is unbounded but not $U$. The three cases are mutually exclusive.
There remains to rule out the last case where $U$ and $V$ are bounded. Consider the unbounded component $W$ of the complement of $K\cup L$. There is at least one point on $\partial W \subset K\cup L$ and if it belongs to $K$ then $V$ is unbounded, if it belongs to $L$ then $U$ is unbounded.
\end{proof}

%For $X$ a compact subset of $\C$, let $\cc{\infty}{X}$ denote the unbounded component of $\C\setminus X$. The letters $cc$ stand for complementary component. For $z\notin X$, let $\cc{z}{X}$ denote the component of $\C\setminus X$ containing $z$.

We can apply Lemma~\ref{lem:noshare} to $K=\gamma$ and $L=\gamma'$.
Case~\ref{item:noshare:1} does not occur because of Lemma~\ref{lem:sepa}, thus either $\realm\gamma\subset\realm{\gamma'}\setminus\gamma'$ or $\realm{\gamma'}\subset\realm{\gamma}\setminus\gamma$. Let us assume that $\realm{\gamma'}\subset\realm{\gamma}\setminus\gamma$, the other case being treated similarly.
Let us apply Lemma~\ref{lem:noshare} to $K= \gamma'$ and $L=\partial \Delta$. By Lemmas~\ref{lem:sepa2} and~\ref{lem:sepa}, $\realm{\partial \Delta} \cap \realm{\gamma'}$ is non-empty since it contains $c'$.
%, so  case~\ref{item:noshare:1} cannot occur.
By~\eqref{eq:cen4}, $\realm{\gamma'}$ cannot be contained in $\realm{\partial \Delta}$.
%case~\ref{item:noshare:2} cannot occur.
Thus the only remaining possibility is $\realm{\partial\Delta}\subset\realm{\gamma'}\setminus\gamma'$.
This, by the assumption above, leads to the inclusion
%only case~\ref{item:noshare:3} can occur: $\partial\Delta\subset\realm{\gamma'}$. By the assumption above,
$\partial\Delta\subset\realm{\gamma}\setminus\gamma$. However, $\partial\Delta$ and $\gamma$ have a point in common (the point $z_0$ in the construction of $\gamma$). We reached a contradiction.

We have thus proved that $H\neq G$ leads to a contradiction. This ends the proof of Proposition~\ref{prop:wada}.

%As we explained before the statement of Proposition~\ref{prop:wada}, this implies $\apo{\Delta}=\realm{\Delta}$

\subsection{Alternate proof}\label{sub:alt}

The referee suggested to us an alternate proof that all partners of $\Delta$ are shielded by $\partial\Delta$. In this proof, one does not need to use the topological Proposition~\ref{prop:wada} in full generality: by dealing with actual Siegel disks, the elaborate construction of $\gamma$ and $\gamma'$ is replaced by the use of the second theorem of Rogers mentioned in Section~\ref{sec:state}, more precisely the claim that $\partial\Delta$ is the boundary of each component of its complement, a fact that we did not use nor reprove here (see~\cite{Rogers}). This proof is much shorter, but comes at the cost of not being self-contained.

The proof begins as the proof of Proposition~\ref{prop:wada}: we introduce the stabilizer $H$ of $\partial \Delta$ and use the fact that $H$ is not trivial.
%its basic properties cited after its definition (this includes Lemmas~\ref{lem:A} and~\ref{lem:AH}) and Lemma~\ref{lem:sepa2}.
We also keep Lemma~\ref{lem:sepa2}: $\partial \Delta$ separates $c'$ from $\infty$. Then we note that the component $W$ of the complement of $\partial\Delta$ that contains $c'$ is $G$-invariant: indeed, $W$ is a Fatou component and for every $g\in G$, $g(W)$ is again a Fatou component ($g$ leaves invariant the intersection of the Julia and the Fatou set with $U_0$, since $P\circ g=P$) and contains $c'$ thus is equal to $W$.
By Rogers' theorem, $\partial W=\partial \Delta$, hence $\partial \Delta$ is $G$-invariant. The result follows.

\subsection{Analytic coverings of the circle with non vanishing derivative}\label{subsec:conj}

As explained in Section~\ref{subsec:extmap}, let $f$ be conjugate to $P^p$ by a conformal isomorphism $\phi : \C\setminus \realm{\Delta}\to \C\setminus\overline\D$ and $\wt f$ be its Schwarz reflection. 
Thanks to local total invariance, 
the domain of $\wt f$ contains some annulus of the form $1/(1+\epsilon)<|z|<1+\epsilon$. Recall that $d$ denotes the local degree of the critical point $c'\in U_0$ and that $\realm{\Delta}^k$ is defined as $\fil{\ov{\Delta^k}}$.

\begin{lemma}\label{lem:cover}
 The restriction of $\wt{f}$ to $\partial \D$ is a degree $d$ covering map of the circle with non-vanishing derivative.
\end{lemma}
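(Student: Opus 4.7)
The plan is to derive both statements by transporting information about $P^p$ through the conformal conjugacy $\phi$.

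For non-vanishing of $\wt f'$ on $\partial \D$, the argument from case~\eqref{item:1} applies verbatim. Since $f$ maps $\{1 < |z| < 1+\epsilon\}$ into $\C \setminus \ov\D$ and $|f(z)|\to 1$ as $|z|\to 1$, the Schwarz extension $\wt f$ sends $\{1-\epsilon < |z| < 1\}$ into $\D$. A critical point $z_0 \in \partial \D$ with local degree $k\geq 2$ would locally send both sides of $\partial \D$ near $z_0$ onto both sides of $\partial \D$ near $\wt f(z_0)$, contradicting this side-preservation. Hence $\wt f|_{\partial \D}$ is an orientation preserving local diffeomorphism.

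For the degree, I would use the $V_k$ and $U_k$ set up in Section~\ref{subsec:moretools}. By the choice of $V_{k+1}$ (no critical value of $P$ in $V_{k+1} \setminus \realm{\Delta^{k+1}}$) together with the observation in Section~\ref{subsec:pfof3} that $P(c')$ is the only critical value of $P$ landing in any $\realm{\Delta^k}$ (so none lies in $\realm{\Delta^{k+1}}$ for $k+1 \not\equiv 1 \bmod p$), each map $P\colon U_k \to V_{k+1}$ for $k \not\equiv 0 \bmod p$ is an unramified cover over the simply connected $V_{k+1}$, hence a biholomorphism. The remaining step $P\colon U_0 \to V_1$ is a ramified cover of degree $d$ with sole ramification point $c'$. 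Composing, a suitable connected open neighborhood $W$ of $\realm{\Delta}$ is mapped by $P^p$ as a ramified cover of degree $d$ onto $V_0$, with $c'$ the unique ramification point (local degree $d$). Combined with the local total invariance $\apo{\Delta} = \realm{\Delta}$ just established, this yields that $P^p$ restricts to an unramified degree $d$ cover $W\setminus\realm{\Delta} \to V_0\setminus\realm{\Delta}$. Conjugating by $\phi$, we obtain that $f$ is an unramified degree $d$ cover between one-sided outer neighborhoods of $\partial \D$, and then $\wt f$, obtained by Schwarz reflection, extends this to a degree $d$ covering map over a two-sided neighborhood of $\partial \D$. Restricting to $\partial \D$ gives a degree $d$ cover of the circle.

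The main obstacle is organizing the degree computation along the composition $P^p = P \circ P \circ \cdots \circ P$. The hypothesis $n_c = 2$ enters here: since $c'$ is the only critical point of $P$ lying in any $\apo{\Delta^k}$ and the other critical point $c''$ is at positive distance from every $\realm{\Delta^k}$, only the single step $P\colon U_0 \to V_1$ contributes ramification. Hence the degree of $P^p$ localized near $\realm{\Delta}$ equals the local degree of $P$ at $c'$, namely $d$.
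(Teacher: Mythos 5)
Your proposal is correct and follows essentially the same route as the paper: non-vanishing of $\wt f'$ on $\partial\D$ from the fact that $f$ preserves the outside of the disk, and degree $d$ by tracking the local degree of $P^p$ near $\realm{\Delta}$ through the chain $U_0\to V_1$, $U_1\to V_2,\dots$, where only the step at $U_0$ is ramified (by $n_0=1$) and the rest are isomorphisms. The only point to tighten is the word ``suitable'' in composing these covers --- since there is no a priori inclusion between $V_{k+1}$ and $U_{k+1}$ one must shrink neighborhoods at each step, which is exactly why the paper phrases the count as ``for all neighborhood $U$ of $\realm{\Delta^k}$ there exists a neighborhood $V$ of $\realm{\Delta^{k+1}}$ such that every point of $V\setminus\realm{\Delta^{k+1}}$ has exactly $d$ or $1$ preimages in $U\setminus\realm{\Delta^k}$''.
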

\begin{proof}
 We saw in Section~\ref{subsec:extmap} that this restriction is an orientation-preserving covering of $\partial \D$ without critical points.
 Let $m$ be its degree.
 %Those two claims hold because $\wt{f}(z)=f(z)$ on $1<|z|<1+\epsilon$ and the image of this annulus by $f$ does not intersect $\D$.
 For every neighborhood $U$ of $\partial \D$ there exists a neighborhood $V$ of $\partial \D$ such that every point in $V\setminus\ov{\D}$ has exactly $m$ preimages under $f$ in $U\setminus \ov{\D}$.
The map $P$ is a degree $d$ ramified covering from $U_0$ to $V_1$ (whose critical value is in $\realm{\Delta}^1$) and an isomorphism from $U_k$ to $V_{k+1}$ when $k\neq 0\pmod p$. Thus for every neighborhood $U$ of $\realm{\Delta}^k$, there exists a neighborhood $V$ of $\realm{\Delta}^{k+1}$ such that every point of $V\setminus\realm{\Delta}^{k+1}$ has exactly $d$ or $1$ preimages under $P$ in $U\setminus\realm{\Delta}^k$, according as $k = 0 \pmod p$ or $k \neq 0 \pmod p$. From this it follows that for every neighborhood $U$ of $\realm{\Delta}$ there exists a neighborhood $V$ of $\realm{\Delta}$ such that every point in $V\setminus\realm{\Delta}$ has exactly $d$ preimages under $P^p$ in $U\setminus\realm{\Delta}$. Thus $m=d$.
\end{proof}

\subsection{Absence of non-repelling cycles on the circle for the external map}\label{subsubsec:nrc}

Assume by way of contradiction that the (extended) external map has a non-repelling cycle on $\partial {\mathbb D}$. Notice that since the unit circle is invariant, this cycle can only be attracting or parabolic with multiplier $1$.
Choose a point $a$ in the cycle, let $m$ be its period and let $\mathcal{A}$ be its immediate basin in the complex plane. Let $\mathcal{B}=\phi^{-1}(\mathcal{A}) = \phi^{-1}(\cal A\cap(\C \setminus \ov\D))$. (Recall that the domain of $\phi^{-1}$ is $\C\setminus\ov\D$.) Then $\mathcal{B}$ is bounded in $\C$ and stable under $P^{mp}$. It is therefore contained in a periodic component $\mathcal{B}'$ of the Fatou set.\footnote{It is not hard to show that in fact ${\mathcal B}' = {\mathcal B}$, but we do not need this.
}
Since every point of $\mathcal{A}$ tends to $a$ under the iteration of $\wt{f}^{m}$, every point of $\mathcal{B}$ tends to $\partial\realm{\Delta}=\partial\Delta$ under the iteration of $P^{mp}$.
In particular $\mathcal{B}'$ is not part of an attracting basin or a cycle of Siegel disks, because every point in such a component has an orbit that stays bounded away from the Julia set. It follows that $\cal B'$ is a component of an immediate parabolic basin. In particular the cycle of components associated to $\mathcal{B}'$ must contain a critical point.

Let us now use the assumption that $P$ has only two critical values. One of them is $P(c')$ and eventually maps to $\Delta$ so its orbit does not accumulate on $\partial \Delta$. The other must therefore be contained in the cycle of $\mathcal{B}'$ so its orbit accumulates on the associated parabolic cycle, a finite set. This contradicts Fatou's theorem asserting that $\partial\Delta\subset \bigcup_c \omega(c)$, the union being over all critical points.

Therefore there is no non-repelling cycle for $\wt f$ on $\partial \D$.

\begin{remark} For polynomials $P$ with more than two critical values, we have not been able to get a contradiction. The reason is that a third critical point may accumulate on all $\partial \Delta$ as required by Fatou's theorem. All our argument proves in this case is that if there is a non-repelling cycle for $\wt f$ on $\partial \D$ then there must be a parabolic cycle on $\partial \Delta$. Unfortunately the separation theorem does not rule this out: the separating external rays may well land on this parabolic point and still separate $\mathcal{B}'$ from $\Delta$. However, the notion of parabolic-like maps may help us deal with this possibility, see~\cite{luna}.
\end{remark}

This is the only place in the proof of Theorem~\ref{thm:preciii} where we use the assumption that $P$ has only two critical values.

\subsection{Hyperbolicity}\label{subsec:Mane}

Let us cite Theorem~A in \cite{Mane2} (see also \cite{Mane2b}).
\begin{theorem*}[Mañé]
Let $N=S^1$ (the circle) or $N=[0,1]$. If $f$ is a $C^2$ map from $N$ to $N$ and $\Lambda\subset N$ is a compact invariant set that doesn't contain critical points, sinks or non-hyperbolic periodic points, then either $\Lambda=N=S^1$ and $f$ is topologically equivalent to an irrational rotation or $\Lambda$ is a hyperbolic set.
\end{theorem*}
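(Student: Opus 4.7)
The plan is to argue by contradiction: assume $\Lambda$ contains no critical points, sinks, or non-hyperbolic periodic points, and that we are not in the special case $\Lambda = S^1$ with $f$ topologically a rotation; then derive uniform hyperbolicity. Equivalently, if $\Lambda$ is \emph{not} uniformly hyperbolic, I want to extract from the failure of expansion a limiting configuration that forces one of the forbidden objects (critical point, sink, or indifferent periodic orbit) to live in $\Lambda$.

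First I would reduce the statement to showing $\inf_{x \in \Lambda} \liminf_{n \to \infty} \tfrac{1}{n}\log |(f^n)'(x)| > 0$. By compactness of $\Lambda$ and $C^2$ regularity away from the critical set, $\log|f'|$ is bounded on a neighborhood of $\Lambda$, so this Lyapunov lower bound is equivalent to uniform hyperbolicity of the form $|(f^n)'(x)| \ge C\lambda^n$. Negating it yields a sequence of points $x_k \in \Lambda$ and times $n_k \to \infty$ with $|(f^{n_k})'(x_k)| \le e^{\epsilon_k n_k}$ for $\epsilon_k \to 0$.

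The key tool I would apply is Pliss' lemma: along each orbit with subexponential growth, one can extract a density of \emph{good times} $m_k \le n_k$ at which the backward derivative products are uniformly expanding, i.e.\ $|(f^j)'(f^{m_k - j}(x_k))| \ge e^{\epsilon j}$ for all $0 \le j \le m_k$. Setting $y_k = f^{m_k}(x_k)$ and passing to a subsequential limit $y^* \in \Lambda$, one obtains a point whose backward orbit admits uniform expansion on arbitrarily long stretches but whose forward orbit fails to expand. The $C^2$ hypothesis enters here to provide bounded distortion for the backward iterates on intervals of definite size around $f^{-j}(y_k)$, via a Koebe-type estimate that in the $C^2$ setting replaces the Schwarzian derivative by a comparison with the cross-ratio of iterates (the Ma\~n\'e--de Melo--van Strien cross-ratio inequality).

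The main obstacle — and the heart of the argument — is to show that such a limiting $y^*$ must accumulate on a critical point, sink, or indifferent periodic orbit. Bounded distortion provides a sequence of pulled-back intervals $I_k \ni y_k$ of uniform size on which $f^{m_k}$ is nearly affine; if the forward orbit of $y^*$ avoided a neighborhood of the critical set forever, one could pull these intervals back indefinitely and obtain forward hyperbolic expansion, contradicting the choice of $x_k, n_k$. So $y^*$ must have an orbit accumulating on some critical point $c$, and then a shadowing argument (following Ma\~n\'e) produces either a periodic orbit passing arbitrarily close to $c$ whose multiplier is bounded, hence non-hyperbolic or attracting, or a critical point in $\Lambda$ directly — contradiction in all cases. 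Finally, the dichotomy case $\Lambda = S^1$ is handled separately: if $\Lambda = S^1$ contains no periodic points and no critical points, then $f$ is a $C^2$ circle diffeomorphism, and Denjoy's theorem (valid in $C^2$) gives topological conjugacy to a rotation; if it contains a periodic point, that point is hyperbolic repelling by hypothesis, and a standard inclination-lemma argument forces $\Lambda$ to be a proper Cantor subset, handled by the previous case.
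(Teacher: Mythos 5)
First, a point of order: the paper does not prove this statement. It is Theorem~A of Ma\~n\'e \cite{Mane2} (with the erratum \cite{Mane2b}), quoted verbatim and used as a black box; so there is no ``paper's own proof'' to compare against, and your attempt has to stand on its own as a proof of a deep theorem in one-dimensional dynamics whose published proof is long and itself required a correction.

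Judged on its own, the sketch has two genuine gaps. The first is the Pliss step, which is applied in the wrong direction: Pliss' lemma extracts a positive density of hyperbolic times from an orbit segment along which the derivative product \emph{does} grow exponentially (Birkhoff sums of $\log|f'|$ bounded below by $c_2>c_1>0$). From the negation of uniform hyperbolicity you have instead points $x_k$ with $|(f^{n_k})'(x_k)|\le e^{\epsilon_k n_k}$, i.e.\ orbit segments along which expansion \emph{fails}; Pliss gives you nothing for such segments, so the points $y_k$ with long uniformly expanding backward stretches are not produced by this argument. The second and more serious gap is that the entire content of the theorem --- why the failure of expansion must be caused by a critical point, a sink, or a neutral periodic orbit inside $\Lambda$ --- is deferred to ``a shadowing argument (following Ma\~n\'e) produces either a periodic orbit\ldots{} whose multiplier is bounded\ldots{} or a critical point in $\Lambda$.'' That is precisely the theorem being proved, not a citable lemma: Ma\~n\'e's actual argument goes through a careful analysis of the nonwandering set, a uniform lower bound on the multipliers of the periodic points in $\Lambda$, and delicate $C^2$ distortion estimates to propagate that expansion to all of $\Lambda$, and none of that is reconstructed here. (The final dichotomy is also shaky: in the case $\Lambda=S^1$ the map need not be a diffeomorphism --- in the present paper it is a degree $d>1$ cover --- so Denjoy's theorem is not available, and ``a standard inclination-lemma argument forces $\Lambda$ to be a proper Cantor subset'' cannot be right when $\Lambda=S^1$ is given.) As it stands the proposal is an outline of the general strategy of the field rather than a proof.
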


The second case does not exclude the possibility that $\Lambda=N=S^1$, consider for instance the angle doubling map on $S^1$.

We can apply Mañé's theorem to the extended external map $\wt{f}$ on $\partial \D$ with $\Lambda = N=\partial \D$: Section~\ref{subsubsec:nrc} proved the absence of attracting or parabolic cycles, and Lemma~\ref{lem:cover} the absence of critical points.
It follows that $\wt{f}$ is hyperbolic on $\partial \D$, which means that there is a continuous function $\rho>0$ on the unit circle such that $\wt{f}$ is uniformly expanding with respect to the metric $\rho(z)|dz|$, i.e.\ $\rho(\wt{f}(z))|\wt f'(z)|>\kappa \rho(z)$ for some $\kappa>1$. Let $s:z\mapsto 1/\ov z$.
\begin{lemma}\label{lem:hyp-ana-top}
There exists a pair of bounded topological annuli $U,V\subset\C$ such that $\partial \D\subset U \Subset V$, $s(U)=U$, $s(V)=V$ and $\wt f:U\to V$ is a covering map.
\end{lemma}
\begin{proof}This is a classical construction, we recall it for completeness.
The following fact will be used twice: \\
(*) Two inverse branches of a holomorphic map that coincide at a given point $z$ must coincide in a neighborhood of $z$. It follows that they must coincide in the connected component containing $z$ of the intersection of their domains.

Let $E$ be the holomorphic bijection from the cylinder $\C/\Z$ to $\C^*$ defined by $E(\zeta)=e^{2\pi i\zeta}$.
It sends $\R/\Z$ to $\partial \D$.
Let us make the coordinate change $z=E(\zeta)$, i.e.\ let us define $F=E^{-1}\circ \wt f\circ E$. To simplify notations below, let also $\mu(x)=\rho(E(x))$ for $x\in\R/\Z$. Then, since $|E'|=2\pi$ on the circle, we have
\[\mu(F(x))|F'(x)|>\kappa \mu(x).\]
Given $\epsilon>0$, let
\[r_x = \frac{\epsilon}{ \mu(x)}\]
and consider the domain
\[ B=\bigcup_{x\in\R/\Z} B_{x}\text{ with } B_{x} = B(x, r_x)
.\]
% For $\epsilon$ small enough,
%Under the restriction $\epsilon\leq \epsilon_1:=\frac{1}{2}\min \mu$,
For $\epsilon$ small enough, all balls $B_{x}$ have radius $\leq 1/2$ and $B$ is contained in the domain of definition of $F$.

We will make use of the following quantitative version of the implicit function theorem:
\begin{sublemma}
Let $r,a>0$ and $f:B(0,r)\to\C$ be holomorphic with $|f'-1|<a<1$. Then
$f$ is injective, its image contains $B(f(0),(1-a)r)$, its inverse is holomorphic and $|(f^{-1})'-1|<a/(1-a)$.
\end{sublemma}
\noindent Injectivity follows from the fact that $f-\on{id}$ is contracting.
The second statement follows for instance from the fixed point theorem applied to the map $z\mapsto w+z-f(z)$, $w\in B(f(0),(1-a)r)$.
Details of the proof are left to the reader. 
%The complement is immediate since $w=f(z)$ implies $(f^{-1})'(w)=1/f'(z)$.

Choose $\kappa'$ such that $$1<\kappa'<\kappa$$ and let
\[ a=1-\frac{1}{\kappa'}. \]
% a positive constant $a<1-\frac{1}{\kappa}$ and
By uniform continuity of $F'$, provided $\epsilon$ is small enough we have
\[ |F'(x+\zeta)-F'(x)|<a|F'(x)| \text{ if } x\in\R/\Z  \text{ and } |\zeta|<r_x
.\]
Let us apply the sublemma above to $f_x:B(0,r_x)\to\C$ defined by $f_x(\zeta)= F(x+\zeta)/F'(x)$. Then $f_x$ has an inverse $g_x$ 
whose domain contains $B(F(x)/F'(x),(1-a)r_x)$. Consider the inverse branch $G_x$ of $F$ defined by $G_x(z)=x+g_x(z/F'(x))$.
Then the domain of $G_x$ contains $B(F(x),(1-a)r_xF'(x)) \Supset B(F(x),r_{F(x)}) = B_{F(x)}$. Since the domain of $G_x$ is equal to $F(B_x)$ it follows that
\[G_x(B_{F(x)}) \Subset B_x.\]
Set  $C_x=G_x(B_{F(x)})$
%\[G_x : \left\{\begin{array}{rcl} B_{F(x)} &\to& C_x \Subset B_x\\
%F(z) &\mapsto& z\end{array}\right..\]
%
and
\[A=\bigcup_{x\in\R/\Z}C_x
.\]
The sublemma also states that $|g_x'|\leq 1+|g_x'-1|<1+a/(1-a)=1/(1-a)=\kappa'$ and thus
\[|G_x'(z)| \leq \kappa'/|F'(x)|,\]
hence
\[|G_x'(z)|\cdot r_{F(x)} \leq \frac{\kappa'}{\kappa}\cdot r_x,\]
whence
\[C_x\subset B(x,\frac{\kappa'}{\kappa} r_x).\]
%mapping the point $F(x)$ to $x$ 
%Then $|F'(x+\zeta)|>(1-a)|F'(x)|$.
%It follows by a simple computation that any inverse branch $G$ of $F$ taking values in $B_x$ must satisfy
%\[|G_x'(F(x+\zeta))| \leq \kappa'|G_x'(F(x))|.\]
%Hence
%\[|G_x'(F(x+\zeta))|\cdot r_{F(x)} \leq \frac{\kappa'}{\kappa}\cdot r_x.\]
%In particular,
%\[G_x(B_{F(x)}) \subset B(x,\frac{\kappa'}{\kappa}r_x) \Subset B_x.\]
%For every $x\in\R/\Z$, the local inverse branch of $F$ mapping $F(x)$ to $x$ can be extended to the disk $B_{F(x)}$ in a unique way, and this extension $G_x$ maps reals to reals and takes values in $B(x,r)$ 
% with $r=\frac{\kappa'}{\kappa}\cdot r_x< r_x$ so $B(x,r)\Subset B_x$. 
The radii of $B(x,\frac{\kappa'}{\kappa}r_x)$ and $B_x$ differ by at least $(1-\frac{\kappa'}{\kappa})\cdot r_x$, whose infimum is positive, so  $A \Subset B$.

Let us now prove that $F:A\to B$ is a covering. For this, it is enough to show
that for every $x\in \R/\Z$, $A\cap F^{-1}(B_{x})$ is the disjoint union of $C_{x_1}$, \ldots, $C_{x_d}$ where $x_1$, \ldots, $x_d$ are the $d$ real preimages of $x$.\footnote{In other words, we prove that $B_x$ is an \emph{evenly covered} neighborhood of $x$.} It is immediate that $A\cap F^{-1}(B_{x})$ contains $C_{x_1} \cup \cdots \cup C_{x_d}$.
The union is on pairwise disjoint sets, for if $C_{x_i}$ meets $C_{x_j}$ then  by (*) the two branches $G_{x_i}$ and $G_{x_j}$ are equal.
There remains to prove that $A\cap F^{-1}(B_{x})$ is contained in $C_{x_1} \cup \cdots \cup C_{x_d}$.
Consider $\zeta\in B_{x}$ and any $\zeta'\in A$ such that $F(\zeta')=\zeta$. By definition of $A$, there exists $x'\in\R/\Z$ such that $\zeta'\in C_{x'}$, so $\zeta\in B_{F(x')}$ and $G_{x'}(\zeta)=\zeta'$. Let $x''=\Re(\zeta)$. The vertical line segment $[\zeta,x'']$ is contained in $B_{x} \cap B_{F(x')}$. The point $x''$ has exactly $d$ preimages, so
there exists $i\in\{1,\ldots,d\}$ such that $G_{x_i}(x'')=G_{x'}(x'')$. Then by (*) the maps $G_{x'}$ and $G_{x_i}$ coincide in a neighborhood of the segment.
Hence $G_{x_i}(\zeta) = G_{x'}(\zeta)$ and $\zeta'\in C_{x_i}$.

The open set $B$ is real-symmetric and is a topological annulus: indeed its intersection with every vertical is a connected open interval containing a real point. Since $F:A\to B$ is a finite degree covering, $A$ is also a topological annulus.

The pair of sets $U=E(A)$ and $V=E(B)$ satisfies the conclusions of the lemma.
\end{proof}

Let $U'=\realm{\Delta}\cup \phi^{-1}(U)$ and $V'=\realm{\Delta}\cup \phi^{-1}(V)$.
Then $U'$ and $V'$ are open, connected, simply connected, $U'$ is a connected component of $P^{-p}(V')$ and $U'$ is compactly contained in $V'$. Therefore the restriction $P^p: U'\to V'$ is a polynomial-like map, which is also unicritical.

So we have proved that $P^p$ has a unicritical polynomial-like restriction whose domain contains $\Delta$. For this restriction, $\Delta$ is a Siegel disk of rotation number $\theta$ and period one. Now we can apply Herman's theorem (which works as well for unicritical polynomial-like maps) to conclude that the critical point $c'$ must belong to $\partial \Delta$. This contradicts our standing assumption that $P(c')\notin\partial\Delta^1$. Alternatively, we can finish the argument as in Lemma~\ref{lem:precii}: Fatou's theorem stating that $\partial \Delta$ is contained in the $\omega$-limit set of the critical points also works for polynomial-like maps, and since the only critical point $c'$ of our polynomial-like map is in a Fatou component, the whole boundary 
$\partial \Delta$ cannot be accumulated by its orbit, leading to a contradiction.

This finishes the proof of the second case of Section~\ref{s:mainproof}: the assumption $P(c')\notin\partial\Delta^1$ leads to a contradiction.

The proof of Theorem~\ref{thm:preciii} is thus complete.

\section*{Acknowledgements}
The authors would like to thank the referee for a very detailed review, leading to notable improvements of the text. We also thank Anna Miriam Benini and Núria Fagella for useful discussions that also improved our text.
Both authors were partially funded by the grant \emph{Lambda} of Agence Nationale de la Recherche: ANR-13-BS01-0002.

\bibliographystyle{alpha}
\bibliography{CritPtBdy}

\end{document}